\providecommand{\U}[1]{\protect\rule{.1in}{.1in}}
\numberwithin{equation}{section}
\newtheorem{thm}{Theorem}[section]
\newtheorem{thmx}{Theorem}%[section]
\newtheorem{lem}[thm]{Lemma}
\newtheorem{prop}[thm]{Proposition}
\newtheorem{rem}{Remark}%[section]
\newcommand{\D}{\Delta}
\newcommand{\R}{\mathbb{R}}
\newcommand{\ve}{\varepsilon}
\newcommand{\A}{\mathcal{A}}
\begin{document}

 \title[Toda system with multiple sources]{On $SU(3)$ Toda system with multiple singular sources}

\thanks{The first author is supported by the SNSF   Grant No. P2BSP2-172064}
\thanks{ The  third author  is partially supported by NSERC}

%\makeatletter   % TO REMOVE COMMA BEFORE ''AND'' IN AUTHORS NAME
%\def\author@andify{%
 % \nxandlist {\unskip ,\penalty-1 \space\ignorespaces}%
%    {\unskip {} \@@and~}%
%    {\unskip \penalty-2 \space \@@and~}%
%}
%\makeatother

\author[A. Hyder]{Ali Hyder}
\address{\noindent Department of Mathematics, University of British Columbia,
Vancouver, B.C., Canada, V6T 1Z2}
\email{ali.hyder@math.ubc.ca}
\author[CS Lin]{Changshou Lin}
\address{\noindent  Taida Institute of Mathematics, Center for Advanced study in Theoretical Science, National Taiwan University, Taipei, Taiwan.}
\email{cslin@math.ntu.edu.tw}
\author[J. Wei]{Juncheng Wei}
\address{\noindent Department of Mathematics, University of British Columbia,
Vancouver, B.C., Canada, V6T 1Z2}
\email{jcwei@math.ubc.ca}
%\date{}

\begin{abstract}
  We consider the singular $SU(3)$ Toda system with multiple singular sources      \begin{align*} \left\{\begin{array}{ll}-\Delta w_1=2e^{2w_1}-e^{w_2}+2\pi\sum_{\ell=1}^m\beta_{1,\ell}\delta_{P_{\ell}}\quad\text{in }\mathbb{R}^2\\ \rule{0cm}{.5cm} -\Delta w_2=2e^{2w_2}-e^{w_1}+2\pi\sum_{\ell=1}^m\beta_{2,\ell}\delta_{P_{\ell}}\quad\text{in }\mathbb{R}^2 \\ w_i(x)=-2\log|x|+O(1)\quad\text{as }|x|\to\infty,\, i=1,2, \end{array}\right.\end{align*}  with $m\geq 3$ and $\beta_{i,\ell}\in [0,1)$. We prove the existence and non-existence results under suitable assumptions on $\beta_{i,\ell}$.  This generalizes Luo-Tian's \cite{Luo-Tian} result for a singular Liouville equation in $\mathbb{R}^2$. We also study existence results for a higher order singular Liouville equation in $\mathbb{R}^n$.
  \end{abstract}

%\pagewiselinenumbers
\maketitle

\section{Introduction}
We consider the following  singular $SU(3)$ Toda system with multiple singular sources
 \begin{align}\label{eq-w-toda} \left\{\begin{array}{ll}-\D w_1=2e^{2w_1}-e^{w_2}+2\pi\sum_{\ell=1}^m\beta_{1,\ell}\delta_{P_{\ell}}\quad\text{in }\R^2\\ \rule{0cm}{.5cm} -\D w_2=2e^{2w_2}-e^{w_1}+2\pi\sum_{\ell=1}^m\beta_{2,\ell}\delta_{P_{\ell}}\quad\text{in }\R^2, \end{array}\right.\end{align} where $P_1,\dots,P_m$ are distinct points in $\R^2$,  $\beta_{i,\ell}\in [0,1)$ and $\delta_P$ denotes the Dirac measure  at $P$ (notice that  source terms are written with a plus sign).  When $w_1=w_2, \beta_{1,l}=\beta_{2,l}=\beta_l$, the above system reduces to the singular Liouville equation \begin{align}\label{eq-w} -\D w=e^{2w}+2\pi\sum_{\ell=1}^m\beta_\ell\delta_{P_\ell}\quad\text{in }\R^2.\end{align} The Toda system \eqref{eq-w-toda} and the Liouville equation \eqref{eq-w} have been widely studied in the literature due to  its important role in geometry and  mathematical physics. For instance, Eq. \eqref{eq-w} is related to the problem of  prescribing Gaussian curvature on surfaces with conical singularity,  and  abelian gauge in Chern-Simons theory \cite{Bar-Lin, Bar-Tar, Ta, Tr}. The Toda system \eqref{eq-w-toda} appears in the description of holomorphic curves in $\mathbb{CP}^3$ \cite{Bolton,Calabi,CW,Doliwa}, and in the non-abelian Chern-Simmon theory \cite{Dunne,NT,Yang}.  For classification and blow-up analysis to the (singular)  Liouville equation and the $SU(n)$ Toda system we refer the reader to \cite{BJMR, Bar-Tar, Bar-Tar2,BM,Brezis-Merle,CL, Carlotto,H-class, HMM,JLW,JW,LWYZ,Lin,Lucia-Nolasco,MarClass,PT,LWY,LWZ,LNW} and the references therein.

Luo-Tian \cite{Luo-Tian} gave a necessary and sufficient condition for the existence of singular metric with three or more conical singularities on the $2$-sphere, whose equivalent statement on $\R^2$ is the following theorem:
\begin{thmx}[\cite{Luo-Tian}] \label{Luo-Tian} Let $m\geq 3$.   Let $P_1,\dots,P_m$ be $m$ distinct points in $\R^2$. Then there exist  continuous functions $h_\ell$ around $P_\ell$ for $\ell=1,\dots,m$, a bounded continuous function $h_{m+1}$ outside a compact set, and a solution $w$ to  \begin{align}\label{eq-w-3}\left\{ \begin{array}{ll} -\D w=e^{2w}&\quad\text{in }\R^2\setminus\{P_1,P_2,\dots,P_m\} \\ w(x)=-\beta_\ell\log|x-P_\ell|+h_\ell(x)&\quad\text{around each }P_\ell \\ \rule{0cm}{.5cm}w(x)=-2\log|x|+h_{m+1}(x)&\quad\text{as }|x|\to\infty \\\beta_\ell\in (0,1) &\quad \ell=1,\dots,m \end{array}\right. \end{align} if and only if \begin{align}\label{cond-beta} \sum_{\ell=1}^m\beta_i<2\quad\text{and }\sum_{\ell\neq j}\beta_\ell>\beta_j\quad\text{for every }j=1,2,\dots,m.\end{align}  Moreover, the solution is unique. \end{thmx}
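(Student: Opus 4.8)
The plan is to prove Theorem~\ref{Luo-Tian} (the Luo--Tian characterization) by reducing the problem on $\R^2$ to a problem on the round sphere $S^2$ via stereographic projection, and then to invoke the existence and uniqueness theory for prescribed Gaussian curvature metrics with conical singularities.  First I would use the stereographic projection $\pi\colon S^2\setminus\{N\}\to\R^2$ to transfer equation~\eqref{eq-w-3} into the problem of finding a conformal metric $g=e^{2u}g_0$ on $S^2$ (with $g_0$ the round metric) having Gaussian curvature $K\equiv 1$ away from the singular points and prescribed cone angles $2\pi(1-\beta_\ell)$ at the images $Q_\ell=\pi^{-1}(P_\ell)$.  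The boundary condition $w(x)=-2\log|x|+h_{m+1}(x)$ as $|x|\to\infty$ is exactly the condition that the metric extends smoothly (with no cone) across the north pole $N$, so the number of cone points is precisely $m$ and the angles are all genuinely conical ($\beta_\ell\in(0,1)$).

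The key analytic tool is the Gauss--Bonnet identity for singular surfaces: if a metric with constant curvature $K=1$ and cone angles $2\pi(1-\beta_\ell)$ exists on $S^2$, then
\begin{align}\label{eq-GB}
  \frac{1}{2\pi}\int_{S^2}K\,dA = \chi(S^2)-\sum_{\ell=1}^m\beta_\ell = 2-\sum_{\ell=1}^m\beta_\ell,
\end{align}
and since the left-hand side equals $\frac{1}{2\pi}\mathrm{Area}>0$, we immediately get the necessity of the first constraint $\sum_\ell\beta_\ell<2$ in~\eqref{cond-beta}.  The second family of constraints $\sum_{\ell\neq j}\beta_\ell>\beta_j$ is the substantive \emph{Troyanov-type} inequality: it is exactly the condition guaranteeing that no single cone angle is ``too large'' relative to the others, equivalently that the divisor $D=\sum_\ell\beta_\ell Q_\ell$ lies in the range where a constant-curvature representative exists.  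I would establish necessity of these inequalities by the same curvature-concentration/blow-up argument used for the sphere: if $\beta_j\geq\sum_{\ell\neq j}\beta_\ell$ the conformal factor would have to concentrate all its curvature mass at $Q_j$, forcing a degeneration that is incompatible with the fixed conformal class, and one derives a contradiction by testing the equation against a suitable cutoff of $\log|x-P_j|$ and applying a Pohozaev-type or Kazdan--Warner-type obstruction.

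For sufficiency I would set up a variational problem.  Writing the candidate solution as $w=w_0+\phi$, where $w_0$ is an explicit background function built from the fundamental solutions $-\beta_\ell\log|x-P_\ell|$ (subtracting off the prescribed logarithmic singularities) plus the correct behavior at infinity, the equation becomes a regularized Liouville equation $-\Delta\phi = V(x)e^{2\phi}$ with a weight $V$ that vanishes to order $2\beta_\ell$ at each $P_\ell$.  The associated Moser--Trudinger functional
\begin{align}\label{eq-functional}
  J(\phi)=\frac{1}{2}\int_{\R^2}|\nabla\phi|^2\,dx - 2\pi\bra{2-\sum_{\ell=1}^m\beta_\ell}\log\int_{\R^2}Ve^{2\phi}\,dx + \int_{\R^2}f\phi\,dx
\end{align}
is then shown to be coercive (and bounded below) precisely when the inequalities~\eqref{cond-beta} hold, via a sharp singular Moser--Trudinger inequality whose optimal constant is governed by $\min_j(1+\beta_j)$ against the total mass; the strict inequalities $\sum_{\ell\neq j}\beta_\ell>\beta_j$ are what keep the optimal constant strictly above the threshold, so the functional stays coercive and attains its infimum.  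Uniqueness I would obtain from the strict convexity of $J$ along the relevant constraint, or equivalently from a maximum-principle/moving-plane comparison argument for two hypothetical solutions.

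The hard part will be the precise matching of the \emph{algebraic} conditions~\eqref{cond-beta} with the \emph{analytic} threshold in the singular Moser--Trudinger inequality: one must verify that coercivity fails exactly on the boundary $\beta_j=\sum_{\ell\neq j}\beta_\ell$ and that in this borderline regime the minimizing sequence bubbles off at the single point $P_j$, producing a non-existence statement rather than a solution.  Carefully tracking the interaction between the singular weights at the different $P_\ell$ and controlling the concentration-compactness dichotomy (ruling out partial bubbling at subsets of the singular points) is the technical crux; this is where Luo--Tian's original geometric argument on $S^2$ does the real work, and in the $\R^2$ formulation it reappears as the delicate analysis of the behavior at infinity together with the singular Trudinger--Moser estimate.
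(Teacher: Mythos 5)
First, a point of comparison: this paper does not prove Theorem \ref{Luo-Tian} at all --- it is imported from \cite{Luo-Tian}, and the authors explicitly remark that Luo--Tian's existence proof is variational, while their own contribution (Theorem \ref{thm1} with $n=2$, Section \ref{sec-Lio}) is a \emph{new} fixed-point proof of the existence direction only. Your sketch therefore follows the classical Troyanov/Luo--Tian variational route rather than anything in the paper. On the existence direction your outline is essentially sound: after stereographic projection the total curvature mass is $\rho=2\pi\bigl(2-\sum_\ell\beta_\ell\bigr)$, and the sharp singular Moser--Trudinger inequality of Troyanov and Chen \cite{Tr,WXChen} makes the functional coercive exactly when $\rho<4\pi\bigl(1-\max_j\beta_j\bigr)$, which together with $\rho>0$ is precisely \eqref{cond-beta} (the inequality for $j=\arg\max_j\beta_j$ implies it for every $j$, so the ``precise matching'' you flag as the hard part is in fact immediate); in the strictly subcritical regime direct minimization then works and no concentration--compactness dichotomy is needed for existence.

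The genuine gaps are in the other two assertions. For uniqueness, your functional $J$ is \emph{not} convex: the coefficient of the term $\log\int Ve^{2\phi}$ is $-2\pi\bigl(2-\sum_\ell\beta_\ell\bigr)<0$ and $\phi\mapsto\log\int Ve^{2\phi}$ is convex, so $J$ is a sum of a convex quadratic and a genuinely concave term --- convexity only holds in the nonpositive-curvature regime, and already for the round sphere ($m=0$) the analogous functional has a noncompact manifold of critical points. Moving planes cannot compare two distinct solutions either, since $m\ge3$ distinct singular points destroy all symmetry. Uniqueness here is a deep result, which Luo--Tian obtain from the correspondence between such cone metrics and spherical convex polytopes together with Alexandrov-type rigidity, not from the variational structure. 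For the necessity of $\sum_{\ell\neq j}\beta_\ell>\beta_j$, Gauss--Bonnet gives only $\sum_\ell\beta_\ell<2$; your proposed ``curvature would have to concentrate at $Q_j$'' argument does not apply to a single fixed solution (there is no sequence, hence nothing concentrates), and a cutoff/Kazdan--Warner test does not produce this inequality. The known proofs pass through the developing map and the exact mass quantization for the entire singular Liouville equation $-\D u=|x|^{2\alpha}e^{2u}$ (the two-dimensional case of Theorem \ref{thmA}, cf.\ \cite{Bar-Tar2}), or through the polytope picture; as written, both the uniqueness and this half of the necessity are claims without proofs.
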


Troyanov \cite{Tr2} studied  singular metrics with  $2$ singulirities (i.e., $m=2$) and  constant curvature $1$ on the $2$-sphere, and showed that  the order of both singularities are equal (i.e., $\beta_1=\beta_2<1$). A necessary and sufficient condition on $\{\beta_1,\beta_2,\beta_3\}\subset(-\infty,1)$ for the existence of singular metric on the  $2$-sphere  has been given in  \cite{Eremenko,Umehara}. See also \cite{BMM,BJMR,Malchiodi-Ruiz} and the references therein for various existence results on  compact surfaces.

In this paper we  study Problem \eqref{eq-w-3}  in the context of $SU(3)$ Toda system. More precisely, we prove existence and non-existence of solutions $(w_1,w_2)$ to \eqref{eq-w-toda}  satisfying  \begin{align}\label{behavior-w-toda}\left\{ \begin{array}{ll}  w_i(x)=-\beta_{i,\ell}\log|x-P_\ell|+h_{i,\ell}&\quad\text{around each point }P_\ell\\ \rule{0cm}{.5cm}w_i(x)=-2\log|x|+h_{i,m+1}&\quad\text{as }|x|\to\infty\\ h_{i,\ell}\text{ is  continuous in a neighborhood of }P_\ell,  \end{array}\right. \end{align}  for $i=1,2$ and $\ell=1,\dots,m$, and  $h_{i,m+1}$ is bounded outside a compact set.
We write $$u_i(x)=w_i(x)+\sum_{\ell=1}^m\beta_{i,\ell}\log|x-P_\ell|,\quad i=1,2. $$ Then $w_i$ solves \eqref{eq-w-toda} if and only if $u_i$ solves \begin{align}\label{eq-u-toda} \left\{ \begin{array}{ll}-\D u_1=2K_1e^{2u_1}-K_2e^{2u_2}\quad\text{in }\R^2 \\ -\D u_2=2K_2e^{2u_2}-K_1e^{2u_1}\quad\text{in }\R^2 \\ \rule{0cm}{.5cm}K_i(x):=\prod_{\ell=1}^m\frac{1}{|x-P_\ell|^{2\beta_{i,\ell}}}\quad i=1,2.\end{array}\right.\end{align}The condition  \eqref{behavior-w-toda} in terms of $u_i$ is  \begin{align}\label{asymp-u-toda}\left\{\begin{array}{ll}u_i(x)=-{\beta_i} \log|x|+\text{ a bounded continuous function} &\quad\text{on  }B_1^c\\ \rule{0cm}{.5cm} \beta_i:=2-\sum_{\ell=1}^m\beta_{i,\ell},&\quad i=1,2,\end{array}\right.\end{align} provided $u_i$ is continuous.

For Toda system with singular sources, the only complete result is \cite{LWY} in which the case of single source, i.e., $m=1$ is completely solved by PDE  and integrable system theory. In \cite{LNW}, some  special cases of $m=2$ are classified using higher order hypergeometric equations. The following theorem gives the {\em first} existence result when $m\geq 3$:

\begin{thm} \label{thm-toda}Let $m\geq 3$. Let $\{\beta_{i,\ell}:i=1,2,\, \ell=1,2,\dots m\}\subset[0,1)$ be such that \begin{align}\label{cond-beta-toda}3(1+\beta_{i,j})< 2\sum_{\ell=1}^m\beta_{i,\ell}+ \sum_{\ell=1}^m\beta_{3-i,\ell},\quad \sum_{\ell=1}^m\beta_{i,\ell}<2,\quad \text{for  }j=1,2,\dots,m,\,i=1,2.\end{align}  Then given $m$ distinct points $\{P_\ell\}_{\ell=1}^m\subset\R^2$ there exists continuous  solution $(u_1,u_2)$ to \eqref{eq-u-toda} such that  \eqref{asymp-u-toda} holds.   \end{thm}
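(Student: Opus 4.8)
The plan is to construct the solution variationally, as a minimizer of the Toda energy functional, the subcriticality condition \eqref{cond-beta-toda} being exactly what makes the functional coercive. First I would record the mass constraints forced by the prescribed decay in \eqref{asymp-u-toda}: integrating the two equations in \eqref{eq-u-toda} over $\R^2$ and using $u_i(x)=-\beta_i\log|x|+O(1)$ gives $2\sigma_1-\sigma_2=2\pi\beta_1$ and $2\sigma_2-\sigma_1=2\pi\beta_2$, where $\sigma_i:=\int_{\R^2}K_ie^{2u_i}\,dx$ and $\beta_i=2-\sum_\ell\beta_{i,\ell}$; hence the masses are prescribed, $\sigma_1=\frac{2\pi}{3}(2\beta_1+\beta_2)$ and $\sigma_2=\frac{2\pi}{3}(\beta_1+2\beta_2)$. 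To handle the non-compactness of $\R^2$ I would absorb the logarithmic behaviour at infinity into a fixed smooth background by setting $u_i=-\beta_i\psi+v_i$ with $\psi(x)=\tfrac12\log(1+|x|^2)$; since $\sum_\ell\beta_{i,\ell}<2$ forces $\beta_i>0$, the weights $K_i e^{-2\beta_i\psi}$ then decay like $|x|^{-4}$, so all exponential integrals converge and the new unknowns $v_i$ live in a space of finite Dirichlet energy.

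On that space I would minimize the functional \[J(v_1,v_2)=\frac13\int_{\R^2}\bra{|\nabla v_1|^2+\nabla v_1\cdot\nabla v_2+|\nabla v_2|^2}\,dx-\sum_{i=1}^2\sigma_i\log\int_{\R^2}K_ie^{2u_i}\,dx,\] whose quadratic part is the positive–definite form attached to the inverse Cartan matrix $\tfrac13\bra{\begin{smallmatrix}2&1\\1&2\end{smallmatrix}}$ and whose Euler–Lagrange system is \eqref{eq-u-toda} (equivalently, one minimizes the Dirichlet part under the constraints $\int K_ie^{2u_i}=\sigma_i$). The essential analytic ingredient is a singular Moser–Trudinger inequality for the $SU(3)$ system, in the spirit of the Jost–Wang inequality \cite{JW} and its singular refinements \cite{LWZ}, adapted to $\R^2$ with the weights $K_i$. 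The key observation is that \eqref{cond-beta-toda} is precisely a strict subcriticality of the masses: a direct computation shows $3(1+\beta_{i,j})<2\sum_\ell\beta_{i,\ell}+\sum_\ell\beta_{3-i,\ell}$ is equivalent to $\sigma_i<2\pi(1-\beta_{i,j})$ at each singular point $P_j$, which places the pair $(\sigma_1,\sigma_2)$ strictly below the first blow-up level of the singular Toda system, while $\sum_\ell\beta_{i,\ell}<2$ controls the effective cone generated at infinity. With these strict inequalities the Moser–Trudinger inequality holds with a definite margin, so $J$ is bounded below and coercive.

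Granting coercivity, I would produce a solution by the direct method: a minimizing sequence is bounded in the energy space, hence weakly convergent, and lower semicontinuity of the Dirichlet part together with the compactness of the exponential terms (from the subcritical Moser–Trudinger bound and the $|x|^{-4}$ decay of the weights) yields a minimizer, which is a weak solution of \eqref{eq-u-toda}. It then remains to upgrade this weak solution to a continuous one with the precise asymptotics \eqref{asymp-u-toda}. For this I would invoke elliptic regularity and a Brezis–Merle type analysis \cite{Brezis-Merle}: $K_ie^{2u_i}\in L^1(\R^2)$ gives local boundedness of $u_i$ away from the $P_\ell$, and representing $u_i$ through its Newtonian potential lets one read off the leading logarithms, recovering $\beta_i=2-\sum_\ell\beta_{i,\ell}$ at infinity and the continuity of the regular parts $h_{i,\ell}$ near each $P_\ell$.

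The main obstacle is the second step: establishing the singular Moser–Trudinger inequality for the \emph{coupled} system with constants sharp enough to match \eqref{cond-beta-toda}, and simultaneously ruling out the loss of compactness that the non-compactness of $\R^2$ and concentration at the singular points or at infinity could cause. The off-diagonal coupling $\nabla v_1\cdot\nabla v_2$ prevents a component-by-component treatment, so the real work is to show that the strict inequalities in \eqref{cond-beta-toda} leave a uniform gap below every local critical threshold, including the one generated at infinity, and thereby to convert subcriticality into genuine coercivity.
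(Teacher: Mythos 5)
Your route is genuinely different from the paper's. The paper deliberately avoids the variational method: it recasts the problem as the integral system \eqref{eq-int-u}--\eqref{int-cond-u}, and the entire analytic content is the a priori bound of Proposition \ref{prop-toda}, proved by a blow-up/contradiction argument in which the local Pohozaev identity of Lemma \ref{lem-poho} forces any interior blow-up point to carry mass at least $1-\beta_{i,\ell}$, contradicting \eqref{barbeta}; Schauder's fixed point theorem then concludes. Your bookkeeping is correct and matches the paper's: the prescribed masses are $\int_{\R^2}K_ie^{2u_i}\,dx=2\pi\bar\beta_i$, and \eqref{cond-beta-toda} is exactly $\bar\beta_i<1-\beta_{i,j}$ for all $j$ (the paper's \eqref{barbeta}), i.e.\ strict subcriticality of each mass below every local blow-up threshold, while $\sum_\ell\beta_{i,\ell}<2$ makes infinity a regular point after Kelvin transform. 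So the two proofs exploit the same numerology; the paper converts it into compactness of approximating sequences, you want to convert it into coercivity of an energy.

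The gap is that the singular Moser--Trudinger inequality for the \emph{coupled} system --- which you yourself flag as ``the real work'' --- is, in this approach, the whole theorem, and you neither prove it nor point to a usable reference. It is in fact available: compactifying to $S^2$ by stereographic projection turns the problem into a singular Toda system on the sphere with conical orders $-\beta_{i,\ell}$ at the images of the $P_\ell$ and a regular point at the north pole (because $\bar K_i=O(|x|^{-4})$), and the sharp system inequality of Battaglia--Malchiodi \cite{BM}, the singular analogue of Jost--Wang \cite{JW}, yields coercivity precisely under the strict inequalities $4\pi\bar\beta_i<4\pi(1+\alpha_{i,j})$, $\alpha_{i,j}=-\beta_{i,j}$. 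So the argument can be closed by citation, but as written it defers exactly the step where \eqref{cond-beta-toda} enters. Two smaller repairs are also needed. First, on $\R^2$ ``finite Dirichlet energy'' is not a workable function space (constants are uncontrolled); one must either quotient by constants and impose the mass constraints, or work on $S^2$. Second, the Euler--Lagrange system of your $J$ is not \eqref{eq-u-toda}: with $u_i=-\beta_i\psi+v_i$ the term $\beta_i\D\psi=2\beta_i(1+|x|^2)^{-2}$ must be generated by a linear term of the form $2\sum_i\bar\beta_i\int_{\R^2}(1+|x|^2)^{-2}v_i\,dx$ (the background-curvature term one sees on $S^2$), and the Lagrange multipliers $\sigma_i/\int K_ie^{2u_i}$ must be absorbed by shifting the $u_i$ by constants. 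These are routine, but the coercivity step is not optional.
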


Note that if $\sum_\ell{\beta_{1,\ell}}=\sum_\ell\beta_{2,\ell}$, then the first condition of \eqref{cond-beta-toda} reduces to $$\sum_{\ell=1}^m\beta_{1,\ell}>1+\beta_{i,j}\quad\text{for every }i=1,2,\, j=1,\dots,m,$$ which is stronger than \eqref{cond-beta}. We shall show that  an equivalent condition of  \eqref{cond-beta} for the Toda system, namely a condition of the form  \begin{align}\label{cond-beta-like}\sum_{\ell=1,\ell\neq j}^m\beta_{i,\ell}>\max\{\beta_{1,j},\beta_{2,j}\}\quad\text{for every }j=1,\dots,m,\,i=1,2,\end{align} is not sufficient  for the existence of solutions to   \eqref{eq-u-toda} satisfying the asymptotic behavior \eqref{asymp-u-toda}. See Lemma  \ref{lem-non-toda}.

\medskip

In \cite{Luo-Tian}, the existence of a solution to (\ref{eq-w}) is proved by a variational argument. In this paper we propose a new proof on the existence via fixed point theory. The crucial step in which we need  condition \eqref{cond-beta-toda} is  Proposition \ref{prop-toda} below, a compactness result which follows from the blow-up analysis of sequences of solutions (see Lemma \ref{lem-poho}). This compactness is used to prove the a priori bounds necessary to run the fixed point argument of \cite{Aviles,HMM,WY}. Let us point out that condition \eqref{cond-beta-like} is sufficient to rule-out  a ``full blow-up'' phenomena (that is, after a suitable rescaling, the limiting profile is a $SU(3)$ Toda system in $\R^2$) for a sequence of solutions to \eqref{eq-u-toda}-\eqref{asymp-u-toda} (for ``half blow-up'' and ``full blow-up'' phenomena see e.g.,  \cite{AW, DPR, MPW}). In particular, condition \eqref{cond-beta-like} is sufficient to prove the a priori estimate when  $\beta_{1,\ell}=\beta_{2,\ell}=\beta_\ell$ and $u_1=u_2$, that is, a priori estimate for the singular Liouville problem \eqref{eq-w-3} .   Moreover, the same method also works for a  higher order generalization of it. %,  under the assumption \eqref{cond-beta}. More precisely, we have the following theorem:

\begin{thm}\label{thm1} Let $m\geq 3$ and $n\geq 2$. For $\ell=1,2,\dots,m$ let $\beta_\ell\in (0,1)$   be such that  \eqref{cond-beta} holds. Then given $m$ distinct points $\{P_\ell\}_{\ell=1}^m\subset \R^n$ there exists a solution  $w\in C^0(\R^n\setminus\{P_1,\dots,P_m\})$    to $$(-\D)^\frac n2w=e^{nw}+\gamma_n\sum_{\ell=1}^m\beta_\ell \delta_{P_\ell}\quad\text{in }\R^n$$ satisfying the asymptotic behavior $$w(x)=-2\log|x|+O(1)\quad\text{as }|x|\to\infty.$$
\end{thm}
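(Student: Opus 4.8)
The plan is to adapt the scalar reduction and fixed-point scheme that underlies the treatment of \eqref{eq-u-toda}. First I would set $u(x)=w(x)+\sum_{\ell=1}^m\beta_\ell\log|x-P_\ell|$; recalling that $(-\D)^{\frac n2}(\log\frac{1}{|x-P_\ell|})=\gamma_n\delta_{P_\ell}$, this exactly cancels the singular sources and reduces the problem to
\begin{align*}
(-\D)^{\frac n2}u=Ke^{nu}\ \text{in }\R^n,\qquad K(x):=\prod_{\ell=1}^m|x-P_\ell|^{-n\beta_\ell},
\end{align*}
where I look for $u$ with $u(x)=-\beta\log|x|+O(1)$ and $\beta:=2-\sum_{\ell=1}^m\beta_\ell$. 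Through the fundamental solution such a $u$ is represented by $u(x)=\frac{1}{\gamma_n}\int_{\R^n}\log\frac{|y|}{|x-y|}Ke^{nu}\,dy+\text{const}$, and this asymptotic behavior is equivalent to the mass normalization $\frac{1}{\gamma_n}\int_{\R^n}Ke^{nu}\,dx=\beta$; the kernel is locally integrable at each $P_\ell$ precisely because $\beta_\ell<1$, and integrable at infinity because $\beta>0$. Undoing the substitution then returns $w\in C^0(\R^n\setminus\{P_\ell\})$ with $w(x)=-2\log|x|+O(1)$.

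Second, I would realize the solution, up to an additive normalization, as a fixed point of the compact mean-field map
\begin{align*}
u\longmapsto \frac{\rho}{\gamma_n}\int_{\R^n}\log\frac{|y|}{|x-y|}\,\frac{K(y)e^{nu(y)}}{\int_{\R^n}Ke^{nu}}\,dy
\end{align*}
acting on a space of continuous functions with prescribed logarithmic growth, where the mass $\rho$ is a parameter and the target is $\rho=\gamma_n\beta$. Embedding this in the family $\rho\in(0,\gamma_n\beta]$ and invoking the fixed-point/degree machinery of \cite{Aviles,HMM,WY}, existence follows once one has a uniform a priori bound along the family: the Leray--Schauder degree is computed at small $\rho$, where the solution is unique, and transported to $\rho=\gamma_n\beta$ by homotopy invariance.

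The crux, and the step I expect to be hardest, is this a priori bound, which is a subcritical-mass compactness statement analogous to Proposition \ref{prop-toda}. Here condition \eqref{cond-beta} enters through the equivalence
\begin{align*}
\eqref{cond-beta}\iff 0<\beta<\min_{1\le j\le m}2(1-\beta_j),
\end{align*}
the first inequality of \eqref{cond-beta} giving $\beta>0$ and the second giving the strict upper bound $\sum_{\ell\neq j}\beta_\ell>\beta_j$. If a sequence of solutions along the family degenerated, I would blow up at each concentration point and pass, after rescaling, to an entire solution of $(-\D)^{\frac n2}U=e^{nU}$ at a regular point or of $(-\D)^{\frac n2}U=|x|^{-n\beta_j}e^{nU}$ at $P_j$; the classification of finite-mass solutions \cite{MarClass,H-class} assigns these the local masses $2$ and $2(1-\beta_j)$ respectively. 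Since the prescribed mass $\beta$ lies strictly below every one of these thresholds, no concentration is possible and the sequence stays bounded. The genuinely delicate points, in contrast to the second-order case, are the higher-order local analysis needed to identify these masses rigorously, which must rule out rescaled limits of polynomial growth coming from the large kernel of $(-\D)^{\frac n2}$ and for which the Pohozaev identity of Lemma \ref{lem-poho} is the essential tool, together with the control of the singular weight $K$ ensuring that no mass escapes into the $P_\ell$ or to infinity.

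With the uniform bound in hand, homotopy invariance of the degree produces a fixed point at $\rho=\gamma_n\beta$; standard elliptic regularity away from the $P_\ell$ gives continuity on $\R^n\setminus\{P_\ell\}$, and the substitution of the first paragraph yields the solution $w$ with the stated regularity and the asymptotic $w(x)=-2\log|x|+O(1)$.
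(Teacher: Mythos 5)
Your proposal follows essentially the same route as the paper: the same substitution reducing to $(-\Delta)^{\frac n2}u=Ke^{nu}$, the same normalized logarithmic-kernel fixed-point operator (the paper runs the homotopy $v=tT(v)$, $t\in[0,1]$, and concludes with Schauder rather than a Leray--Schauder degree computation in the mass $\rho$, but the two are interchangeable here), and the same a priori bound obtained by comparing the total mass $\beta$ against the blow-up thresholds $2$ and $2(1-\beta_j)$ via the equivalence of \eqref{cond-beta} with $0<\beta<2(1-\beta_j)$. The only slip is one of attribution: in the higher-order setting the local mass quantization is supplied by the classification of normal solutions in Theorem~\ref{thmA} (which is also where the polynomial kernel of $(-\Delta)^{\frac n2}$ is ruled out, via the normal/integral-equation formulation), not by Lemma~\ref{lem-poho}, which is the two-dimensional Toda Pohozaev identity.
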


Here  $\gamma_n:=\frac{(n-1)!}{2}|S^n|$ is such that $$\frac{1}{\gamma_n}(-\D)^\frac{n}{2} \log\frac{1}{|x|}=\delta_0.$$

%Here and in the following the operator $(-\D)^\frac n2$ is defined as follows:

% Let $\mathcal{S}(\R^n)$ be the Schwartz space of rapidly decreasing functions, and for $s>0$ set $$L_s(\R^n):=\left\{u\in L^1_{loc}(\R^n):\int_{\R^n}\frac{|u(x)|}{1+|x|^{n+2s}}dx<\infty\right\}.$$ Given $N\ge 1$ and a tempered distribution $f\in \mathcal{S}'(\R^n)$, we say that $u$ is a solution of $$(-\Delta)^\frac N2u=f\quad\text{in }\R^n$$ if the following holds. In case $N\ge 2$ even we require $u\in L^1_{loc}(\R^n)$ and $$\int_{\R^n}u(-\Delta)^{\frac{N}{2}}\vp dx=\langle f,\vp \rangle\quad\text{for every }\vp\in C^\infty_c(\R^n).$$  In case $N\ge 1$ odd we require $u\in L_{\frac N2}(\R^n)$ and $$\int_{\R^n}u(-\Delta)^{\frac{N}{2}}\vp dx=\langle f,\vp \rangle\quad\text{for every }\vp\in \mathcal{S}(\R^n),$$ where for $\vp\in\mathcal{S}(\R^n)$ and $s>0$ $$(-\Delta)^\frac{s}{2}\varphi:= \mathcal{F}^{-1}(|\xi|^{s}\mathcal{F}{\varphi}(\xi)),\quad \mathcal{F}\varphi(\xi):=\frac{1}{(2\pi)^\frac{n}{2}}\int_{\R^n}\varphi(x)e^{-ix\cdot\xi}dx,$$ $\mathcal{F}^{-1}$ denotes the inverse Fourier transform. Note that $u(-\D)^\frac N2\vp\in L^1(\R^n)$ for $u\in L_\frac N2(\R^n)$ and $\vp\in \mathcal(\R^n)$, thanks to the estimates (see e.g. \cite[Proposition 2.1]{H-class}) $$(-\Delta)^s\varphi(x)=O(|x|^{-n-2s})\quad \text{as }|x|\to\infty.$$

\section{Proof of Theorem \ref{thm-toda}}

%\begin{thmx}[\cite{LWY}]\label{LWY} Let $w_i$ be a solution to $$-\D w_i=\sum_{j=1}^na_{i,j}e^{2w_j}-2\pi\alpha_i\delta_0\quad\text{in }\R^2,\quad \int_{\R^2}e^{2w_i}dx<\infty,\quad i=1,2,\dots,n,$$ for some $\alpha_i>-1$, where $(a_{i,j})$ is the Cartan matrix of $SU(n+1)$.  Then $$\sum_{j=1}^na_{i,j}\int_{\R^2}e^{2w_j}dx=2\pi(2+\alpha_i+\alpha_{n+1-i}),\quad i=1,2,\dots,n.$$ Equivalently, $\tilde w_i:=w_i-\alpha_i\log|x|$ is a solution to $$-\D \tilde w_i=\sum_{j=1}^na_{i,j}|x|^{2\alpha_j}e^{2\tilde w_j}\quad\text{in }\R^2,\quad \int_{\R^2}|x|^{2\alpha_j}e^{2\tilde w_j}dx<\infty,\quad i=1,2,\dots,n,$$and it satisfies  $$\sum_{j=1}^na_{i,j}\int_{\R^2}|x|^{2\alpha_j}e^{2\tilde w_j}dx=2\pi(2+\alpha_i+\alpha_{n+1-i}),\quad i=1,2,\dots,n.$$  \end{thmx}

It is well-know that if $(u_1,u_2)$ is a  solution to \eqref{eq-u-toda}    with $\beta_{i,\ell}<1$ and $u_i $, $K_ie^{2u_i}\in L^1_{loc}(\R^2)$, then $u_i$ is continuous.  On the other hand, if $(u_1,u_2)$ is a continuous solution to \eqref{eq-u-toda}-\eqref{asymp-u-toda} with $\beta_{i,\ell}<1$, then $K_ie^{2u_i}=O(|x|^{-4}) $ as $|x|\to\infty$. In particular, $\log|\cdot|K_ie^{2u_i}\in L^1(\R^2)$, and $u_i$ satisfies the integral equation  \begin{align}\label{eq-int-u}u_i(x):=\frac{1}{2\pi}\sum_{j=1}^2a_{i,j}\int_{\R^2}\log\left(\frac{1}{|x-y|}\right)K_j(y)e^{2(u_j(y))}dy +c_i,\quad i=1,2,\end{align}  for some $c_i\in\R$, where    $(a_{i,j})$ is  the $SU(3)$ Cartan matrix  $$ \begin{pmatrix} 2 & -1 \\ -1 & 2  \end{pmatrix}. $$ Moreover, the asymptotic behavior \eqref{asymp-u-toda} implies that $$\sum_{j=1}^2a_{i,j}\int_{\R^2}K_je^{2u_j}dx=2\pi\beta_i,\quad i=1,2,$$ that is  \begin{align}\label{int-cond-u}\int_{\R^2}K_ie^{2u_i}dx=2\pi\bar\beta_i,\quad \bar\beta_i:=\frac{1}{3}(2\beta_i+\beta_{3-i}),\quad i=1,2.\end{align}
Thus,   Theorem \ref{thm-toda} is equivalent to the existence of solution $(u_1,u_2)$ to \eqref{eq-int-u}-\eqref{int-cond-u}. Moreover,   \eqref{cond-beta-toda} in terms of $\bar\beta_i$ is \begin{align}\label{barbeta}\bar\beta_i>0,\quad  \bar\beta_i<1-\beta_{i,\ell}\quad\text{for every }i=1,2,\, \ell=1,\dots,m. \end{align}

In order to prove existence of solution to \eqref{eq-int-u}-\eqref{int-cond-u} we use a fixed point argument on the space  $$X: =C_0(\R^2)\times C_0(\R^2),\quad \|{\bf v}\|:=\max\{\|v_1\|_{L^\infty(\R^2)},\|v_2\|_{L^\infty(\R^2)}\}\quad\text{for }{\bf v}=(v_1,v_2)\in X,$$ where $C_0(\R^2)$ denotes the space of continuous functions vanishing at infinity.    We fix $u_0\in C^\infty (\R^2)$ such  that   $$u_0(x)=-\log|x|\quad\text{ on }B_1^c. $$ For $v\in C_0(\R^2)$ let  $c_{i,v}\in\R$ be  the unique number so  that   \begin{align}\label{eq-ci} \int_{\R^2}\bar{K_i}e^{2(v+c_{i,v})}dx=2\pi\bar\beta_i,\quad \bar K_i:=K_ie^{2\beta_i u_0},\quad i=1,2,\end{align} where $\bar\beta_i$ is as in \eqref{int-cond-u}.   Now we define $T:X\to  X$, $(v_1,v_2)\mapsto(\bar v_1,\bar v_2)$, where we have set \begin{align}\label{bar-vi} \bar v_i(x):=\frac{1}{2\pi}\sum_{j=1}^2a_{i,j}\int_{\R^2}\log\left(\frac{1}{|x-y|}\right)\bar K_j(y)e^{2(v_j(y)+c_{j,v_j})}dy-\beta_i u_0(x),\quad i=1,2.\end{align}
  As  $\beta_i=2\bar\beta_i-\bar\beta_{3-i}$, for $x\in B_1^c$ this can be written as $$ \bar v_i(x):=\frac{1}{2\pi}\sum_{j=1}^2a_{i,j}\int_{\R^2}\log\left(\frac{|x|}{|x-y|}\right)\bar K_j(y)e^{2(v_j(y)+c_{j,v_j})}dy,\quad i=1,2.$$ Using that $\bar K_i=O(|x|^{-4})$ for $|x|$ large,  one can show that $(\bar v_1,\bar v_2)\in X$.  Moreover, the operator $T$ is compact (see e.g. the proof of \cite[Lemma 4.1]{HMM}).

The following proposition is crucial in proving existence of fixed point of $T$.
\begin{prop} \label{prop-toda}There exists $C>0$ such that $$\|{\bf v}\|_{X} \leq C\quad\text{for every } ({\bf v},t)\in X\times[0,1]\text{ satisfying }{\bf v}=tT({\bf v}).$$ \end{prop}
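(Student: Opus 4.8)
The plan is to argue by contradiction through a concentration--compactness (blow-up) analysis. Suppose the bound fails; then there are $t_k\in[0,1]$ and ${\bf v}^{(k)}=(v_1^{(k)},v_2^{(k)})\in X$ with ${\bf v}^{(k)}=t_kT({\bf v}^{(k)})$ and $M_k:=\|{\bf v}^{(k)}\|_X\to\infty$, and after passing to a subsequence $t_k\to t_0\in[0,1]$. Writing $c_i^{(k)}:=c_{i,v_i^{(k)}}$ and $g_i^{(k)}:=\bar K_ie^{2(v_i^{(k)}+c_i^{(k)})}$, the normalization \eqref{eq-ci} fixes the masses once and for all, $\int_{\R^2}g_i^{(k)}\,dx=2\pi\bar\beta_i$ for every $k$. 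Applying $-\D$ to the identity $v_i^{(k)}=t_k\bar v_i^{(k)}$ and using \eqref{bar-vi}, the profiles $\psi_i^{(k)}:=v_i^{(k)}+c_i^{(k)}+t_k\beta_iu_0$ solve the singular Toda-type system $-\D\psi_i^{(k)}=t_k\sum_{j=1}^2a_{i,j}W_j^{(k)}e^{2\psi_j^{(k)}}$, where $W_j^{(k)}:=K_je^{2\beta_j(1-t_k)u_0}$ preserves the singular weight $|x-P_\ell|^{-2\beta_{j,\ell}}$ at each source and satisfies $W_j^{(k)}e^{2\psi_j^{(k)}}=g_j^{(k)}$. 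Thus the problem reduces to the compactness of a sequence of solutions of a singular $SU(3)$ Toda system carrying the fixed, quantized masses $2\pi\bar\beta_i>0$.

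I then pass to the blow-up analysis. Up to a subsequence $g_i^{(k)}\,dx\rightharpoonup\mu_i$ weakly-$*$, each $\mu_i$ of total mass $2\pi\bar\beta_i$, and the Brezis--Merle alternative for Toda systems (cf. \cite{Brezis-Merle,JLW,LWZ}) produces a finite blow-up set $S=\{p:\mu_1(\{p\})+\mu_2(\{p\})\geq\gamma_0\}$, for a universal threshold $\gamma_0>0$, such that off $S$ the densities $g_i^{(k)}$ stay locally bounded while each $\psi_i^{(k)}$ is either locally bounded or tends to $-\infty$ locally uniformly. If $S=\emptyset$ then $g_i^{(k)}$ does not concentrate, so the integral representation \eqref{bar-vi}, together with the fixed masses and the decay $\bar K_i=O(|x|^{-4})$, yields a uniform $L^\infty$-bound on $\bar v_i^{(k)}$, hence on $v_i^{(k)}=t_k\bar v_i^{(k)}$, contradicting $M_k\to\infty$. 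Therefore $S\neq\emptyset$, and it remains to reach a contradiction from the existence of a blow-up point $p\in S$.

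The core of the argument is to exclude every concentration scenario at $p$ by matching local masses against the thresholds imposed by \eqref{barbeta}. If a single component $\psi_i^{(k)}$ concentrates at $p$ (\emph{half blow-up}), a standard rescaling produces in the limit the (possibly singular) Liouville bubble $-\D U=2t_0\,|x-p|^{-2\alpha}e^{2U}$, with $\alpha=\beta_{i,\ell}$ if $p=P_\ell$ and $\alpha=0$ at a regular point; quantization of the bubble mass forces $\mu_i(\{p\})\geq2\pi(1-\alpha)/t_0$, whence $2\pi\bar\beta_i=\mu_i(\R^2)\geq2\pi(1-\alpha)$, i.e. $\bar\beta_i\geq1-\beta_{i,\ell}$ (with $\beta_{i,\ell}=0$ at a regular point). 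This contradicts the strict inequality $\bar\beta_i<1-\beta_{i,\ell}$ of \eqref{barbeta}. If instead both components concentrate at $p$ (\emph{full blow-up}), the rescaled limit is an entire singular $SU(3)$ Toda system, and the Pohozaev identity of Lemma \ref{lem-poho} produces a quadratic relation among the pair of local masses $(\mu_1(\{p\}),\mu_2(\{p\}))$ that is violated under our hypotheses — this is precisely the regime already excluded under the weaker condition \eqref{cond-beta-like}. In every case we obtain a contradiction, so $S=\emptyset$ and the proposition follows.

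The main difficulty is the full blow-up case: unlike the single Liouville equation, the $SU(3)$ Toda system admits several distinct quantized local-mass pairs, so excluding simultaneous concentration of both components rigorously requires the sharp Pohozaev identity of Lemma \ref{lem-poho} and the classification of entire singular Toda bubbles (cf. \cite{JLW,LWZ}). Two further technical points deserve care: the homotopy parameter $t_k$ must be tracked uniformly — it only \emph{raises} the concentration thresholds (the factor $1/t_0$ above), while the degenerate endpoint $t_0=0$, where ${\bf v}^{(k)}\equiv0$, is handled directly — and the additive constants $c_i^{(k)}$ together with the auxiliary weight $e^{2\beta_j(1-t_k)u_0}$ must be controlled so that boundedness of the profiles $\psi_i^{(k)}$ translates, through \eqref{bar-vi}, into boundedness of $\|{\bf v}^{(k)}\|_X$.
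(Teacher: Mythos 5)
Your overall strategy coincides with the paper's: argue by contradiction, use the normalization \eqref{eq-ci} to fix the total masses at $2\pi\bar\beta_i$, and exclude every concentration point by comparing the local masses $\sigma_i$ against the thresholds $1-\beta_{i,\ell}$ supplied by Lemma \ref{lem-poho}, which is incompatible with \eqref{barbeta} since $\sigma_i\leq \bar\beta_i<1-\beta_{i,\ell}$. (One remark: the paper does not need to separate half and full blow-up, because the single alternative ``$\sigma_1\geq 1-\alpha_1$ or $\sigma_2\geq 1-\alpha_2$'' in Lemma \ref{lem-poho} disposes of both regimes at once; your appeal to the classification of entire singular Toda bubbles is more machinery than the argument requires.)

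The genuine gap is that you never rule out concentration at infinity. You assert that the weak-$*$ limits $\mu_i$ carry the full mass $2\pi\bar\beta_i$ and that $S=\emptyset$ forces uniform bounds via the decay $\bar K_i=O(|x|^{-4})$; but $v_i^{(k)}\in C_0(\R^2)$ only for each fixed $k$, and along the sequence a bump of $\psi_i^{(k)}$ may drift off to infinity carrying a fixed amount of mass, in which case $\mu_i(\R^2)<2\pi\bar\beta_i$, the set $S$ may well be empty, and your argument produces no contradiction. The decay of $\bar K_i$ does not prevent this, since $e^{2(v_i^{(k)}+c_i^{(k)})}$ is not uniformly controlled at infinity. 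The paper devotes its Case 3 precisely to this scenario: one applies the Kelvin transform $\tilde\psi_i^k(x)=\psi_i^k(x/|x|^2)$, notes that $\tilde K_i(0)>0$, and thereby reduces blow-up at infinity to blow-up at a regular point, which is then excluded as in Case 1. Without this step the proof is incomplete. A second, smaller inaccuracy: $t_k\to 0$ does not give ${\bf v}^{(k)}\equiv 0$ for $t_k>0$; the correct observation is that the effective local masses are bounded by $t_k\bar\beta_i$ and hence fall below every concentration threshold, which is automatic in the paper's normalization where $\tfrac12\log t^k$ is absorbed into the constant $c_i^k$.
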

\begin{proof}  We assume by contradiction that the proposition is false. Then there exists ${\bf v}^k=(v_1^k,v_2^k)$ and $t^k\in (0,1]$ with ${\bf v}^k=t^kT({ \bf v}^k)$  such that $\|{\bf v}^k\|\to\infty$.  We set $$\psi_i^k:=v_i^k+c_i^k,\quad c_i^k:=c_{i,v_i^k}+\frac12\log t^k.$$ Then we have \begin{align}\left\{\begin{array}{ll} \psi_1^k(x)=\frac{1}{2\pi} \int_{\R^2}\log\left(\frac{1}{|x-y|}\right)\left(2\bar K_1(y)e^{2\psi_1^k(y)}-\bar K_2(y)e^{2\psi_2^k(y)} \right)dy-t^k\beta_1 u_0(x)+c_1^k\\ \rule{0cm}{.8cm}\psi_2^k(x)=\frac{1}{2\pi} \int_{\R^2}\log\left(\frac{1}{|x-y|}\right)\left(2\bar K_2(y)e^{2\psi_2^k(y)}-\bar K_1(y)e^{2\psi_1^k(y)} \right)dy-t^k\beta_2 u_0(x)+c_2^k.\end{array} \right. \end{align} For $|x|\geq 1$ this is equivalent to \begin{align}\left\{\begin{array}{ll} \psi_1^k(x)=\frac{1}{2\pi} \int_{\R^2}\log\left(\frac{|x|}{|x-y|}\right)\left(2\bar K_1(y)e^{2\psi_1^k(y)}-\bar K_2(y)e^{2\psi_2^k(y)} \right)dy+c_1^k\\ \rule{0cm}{.8cm}\psi_2^k(x)=\frac{1}{2\pi} \int_{\R^2}\log\left(\frac{|x|}{|x-y|}\right)\left(2\bar K_2(y)e^{2\psi_2^k(y)}-\bar K_1(y)e^{2\psi_1^k(y)} \right)dy+c_2^k.\end{array} \right. \end{align}
Since $\|{\bf v}^k\|\to\infty$, we necessarily have $$\max\{\sup \psi_1^k,\sup \psi_2^k\}\to\infty. $$ Without any loss of generality we assume that $\sup \psi ^k_1\geq \sup \psi_2^k$.  We fix $x^k\in\R^2$ such that $$\sup\psi_1^k<\psi^k_1(x^{k})+1. $$  If $x^{k}$ is bounded then, up to a subsequence, $x^{k}\to x^\infty$.

 We consider the following three cases.

\medskip

\noindent\textbf{Case 1} $  x^\infty\in\R^2\setminus \{P_\ell:\ell=1,2,\dots,m\}$.

By Lemma \ref{lem-poho} (see also \cite{JLW, Lucia-Nolasco})  we have   $$\max\{\sigma_1(x^\infty),\sigma_2(x^\infty)\}\geq 1 ,$$ where the  blow-up value at a point $P$ is defined by  $$\sigma_i(P):=\lim_{r\to0}\lim_{k\to\infty}\frac{1}{2\pi}\int_{B_r(P)}\bar K_ie^{2\psi_i^k}dx,\quad i=1,2.$$   This   contradicts \eqref{barbeta} as $\sigma_i(x^\infty)\leq \bar\beta_i<1.$

\medskip

\noindent\textbf{Case 2} $  x^\infty\in \{P_\ell:\ell=1,2,\dots,m\}$.

Without loss of generality we  assume that $x^\infty=P_1$. Notice that $$\bar K_i(x)=\frac{f_i(x)}{|x-P_1|^{2\beta_{i,1}}},\quad i=1,2,$$ for some positive continuous functions  $f_1$ and $f_2$ in a small neighborhood of the point $P_1$. In particular,  the functions $w_i^k(x):=\psi_i^k(x-P_1)$ satisfies the conditions of Lemma \ref{lem-poho}  for some $R>0$, and  we get  $$\sigma_1(x^\infty)\geq    1- \beta_{11},\quad\text{or }\sigma_2(x^\infty)\geq 1-\beta_{2,1}  ,$$ a contradiction to \eqref{barbeta}.  % This is a  contradiction as from   \eqref{cond-beta-toda} one gets   $$\sigma_i\leq \bar\beta_i<1-\beta_{i,\ell}\quad\text{for evry }i=1,2,\,\ell=1,\dots,m.$$%.Therefore \begin{align*}2\pi(1- \max\{\beta_{11},\beta_{2,1}\})  &\leq  \max\{\sigma_1,\sigma_2\}\leq 2\pi\beta=2\pi\left(2-\sum_{\ell=1}^m\beta_{i,\ell}\right) ,\quad i=1,2,\end{align*} which gives  $$\sum_{\ell=1}^m\beta_{i,\ell}\leq 1+\max\{\beta_{1,1},\beta_{2,1}\}, $$ a

\medskip

\noindent\textbf{Case 3} $|x^{k}|\to\infty$.

We set $$\tilde \psi_i^k(x)=\psi_i^k(\frac{x}{|x|^2}),\quad \tilde K_i(x)=\frac{1}{|x|^4}\bar K_i(\frac{x}{|x|^2}) \quad\text{on }\R^2\setminus\{0\},\quad i=1,2,$$ and extend them continuously at the origin.  Then $\tilde \psi_i^k$ satisfies \begin{align}\left\{\begin{array}{ll}\tilde  \psi_1^k(x)=\frac{1}{2\pi} \int_{\R^2}\log\left(\frac{|y|}{|x-y|}\right)\left(2\tilde K_1(y)e^{2\tilde \psi_1^k(y)}-\tilde K_2(y)e^{2\tilde  \psi_2^k(y)} \right)dy+c_1^k\\ \rule{0cm}{.8cm}\tilde \psi_2^k(x)=\frac{1}{2\pi} \int_{\R^2}\log\left(\frac{|y|}{|x-y|}\right)\left(2\tilde  K_2(y)e^{2\tilde \psi_2^k(y)}-\tilde  K_1(y)e^{2\tilde \psi_1^k(y)} \right)dy+c_2^k,\end{array} \right. \end{align} for $x\in B_1$.     Since $\tilde K_i(0)>0$ for $i=1,2$,  and  $$\tilde\psi_1^k(\tilde x_k)\to\infty,\quad \tilde x_k:=\frac{x_k}{|x_k|^2}\to0,$$ one obtains a contradiction as in Case 1.

We conclude the proposition.

\end{proof}

\bigskip

\noindent\emph{Proof of Theorem \ref{thm-toda}} It follows from Proposition \ref{prop-toda} and Schauder fixed point theorem that the operator $T$ has a fixed point, say $(v_1,v_2)$.  Then setting $$u_i:=v_i+\beta_i u_0+c_{i,v_i},\quad i=1,2,$$ one sees that $(u_1,u_2)$ is a solution to \eqref{eq-u-toda}-\eqref{asymp-u-toda}.

\section{Non-existence results}

We show that Theorem \ref{thm-toda} is not true if the assumption \eqref{cond-beta-toda} is replaced by \eqref{cond-beta-like}.  Let us  fix $\beta_1,\dots,\beta_7\in (0,1)$ such that the assumptions  $\A1)$ to $\A5)$ hold:  \begin{itemize} \item[$\A1)$] $\beta_4+\sum_{\ell=1}^4\beta_\ell =2$\item[$\A2)$]  $\beta_2+\beta_3<\beta_1 $ \item[$\A3)$]  $\beta_4<\frac13$        \item[$\A4)$]  $ \beta_4+\sum_{\ell=5}^7 \beta_\ell=2$ \item[$\A5)$] $\beta_4+\beta_5<1$.\end{itemize}It is easy to see that $\A1)$ and $\A2)$ implies that \begin{itemize}\item[$\A6)$] $\beta_4+\beta_1>1$ and  $\beta_4+\beta_\ell< 1$  for $\ell=2,3.$ \end{itemize}

\medskip
 We shall show an non-existence result to the Toda system \eqref{eq-w-toda} satisfying \eqref{behavior-w-toda}    for the following choice of $\{\beta_{i,\ell}\}$:  \begin{align} \label{beta-14} \beta_{1,\ell}:=\left\{\begin{array}{ll} \beta_\ell\quad\text{for }\ell=1,2,3,4\\ 0\quad\text{for }\ell=5,6,7\end{array}\right.,\quad   \beta_{2,\ell}:=\left\{\begin{array}{ll}  0\quad\text{for }\ell=1,2,3,4\\ \beta_\ell\quad\text{for }\ell=5,6,7.\end{array}\right. \end{align}

Let us   point out that   we can choose $\{\beta_{\ell}\}$  satisfying $\A1)$ to $\A5)$ in such a way that $\{\beta_{i,\ell}\}$ satisfy \eqref{cond-beta-like} with $m=7$, $i=1,2$. For instance, one can simply take $$\beta_1=1-\ve,\,\beta_2=\beta_3=\frac12-\ve,\,\beta_4=\frac{3\ve}{2},\,\beta_5=1-\frac{5\ve}{2},\,\beta_6=\beta_7=\frac{1+\ve}{2},\,\ve\in(0,\frac29).$$   For these $\beta_\ell$'s one has $$\sum_{\ell=1}^7\beta_{1,\ell}=(1+\beta_{1,1})-\frac\ve2,$$ and hence $\{\beta_{i,\ell}\}$ does not satisfy \eqref{cond-beta-toda}.

We begin with the following non-existence result for a singular Liouville equation.

\begin{lem}\label{lem-non} Let $\beta_\ell\in (0,1)$ with $\ell=1,2,3,4$ be such that  $\A1)$ to $\A3)$ hold.  Let $P_1,P_2,P_3$ be fixed three distinct points in $\R^2$. Then, for $|P_4|$ large enough,  there exists no continuous solution to \begin{align}\label{eq-non}-\D u=\prod_{\ell=1}^4\frac{1}{|x-P_\ell|^{2\beta_\ell}}e^{2u}\quad\text{in }\R^2,\quad u(x)=-2\beta_4\log|x|+O(1)\quad\text{as }|x|\to\infty.\end{align}\end{lem}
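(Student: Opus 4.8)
The plan is to argue by contradiction: suppose that along a sequence $P_4=P_4^k$ with $|P_4^k|\to\infty$ (and $P_1,P_2,P_3$ fixed) there exist continuous solutions $u^k$ of \eqref{eq-non}. First I would record the two rigid facts valid for every $k$. Integrating the equation and using $u^k(x)=-2\beta_4\log|x|+O(1)$ gives the fixed total mass $\frac{1}{2\pi}\int_{\R^2}K^ke^{2u^k}\,dx=2\beta_4$, which by $\A3)$ is strictly less than $2/3$. Writing $w^k=u^k-\sum_{\ell=1}^4\beta_\ell\log|x-P_\ell|$, the function $w^k$ solves $-\D w^k=e^{2w^k}+2\pi\sum_\ell\beta_\ell\delta_{P_\ell}$, and assumption $\A1)$ forces $w^k(x)=-(2+\beta_4)\log|x|+O(1)$ at infinity; geometrically this is a constant-curvature metric on $S^2$ with conical singularities of order $\beta_\ell$ at $P_\ell$ and an excess-angle singularity of order $\beta_4$ at the north pole.

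Next I would push $P_4$ into the north pole. Applying the inversion $y=x/|x|^2$ and absorbing the divergent multiplicative constant into the unknown, I obtain a renormalized sequence $\hat u^k$ solving $-\D\hat u^k=\hat K^k e^{2\hat u^k}$ with $\hat u^k(y)=-2\log|y|+O(1)$ at infinity and $\hat K^k(y)=|y|^{2(2-\beta_4)}|y-P_4^{*,k}|^{-2\beta_4}\prod_{\ell=1}^3|y-P_\ell^*|^{-2\beta_\ell}$, where $P_\ell^*=P_\ell/|P_\ell|^2$ and $P_4^{*,k}\to0$. The point of $\A1)$ is precisely that as $P_4^{*,k}\to0$ the vanishing factor $|y|^{2(2-\beta_4)}$ and the pole $|y-P_4^{*,k}|^{-2\beta_4}$ combine so that the origin becomes a regular point of the underlying metric (the two colliding cone orders, $+\beta_4$ at the pole and $-\beta_4$ at $P_4$, cancel). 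Hence locally $\hat K^k\to\hat K^\infty(y)=|y|^{2(2-2\beta_4)}\prod_{\ell=1}^3|y-P_\ell^*|^{-2\beta_\ell}$, the weight of the pure three–point problem with conical orders $\beta_1,\beta_2,\beta_3$ at $P_1^*,P_2^*,P_3^*$ (origin and infinity being regular points of the metric), still of total mass $2\beta_4=2-\sum_{\ell=1}^3\beta_\ell$.

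I would then run the blow-up/compactness machinery. Since the total mass is $2\beta_4<2/3$, the quantized lower bounds of Lemma \ref{lem-poho} rule out concentration at the origin, at the colliding point $P_4^{*,k}$ (threshold $1-\beta_4>2/3$) and at every regular point (threshold $1$); the only admissible blow-up points are $P_1^*,P_2^*,P_3^*$. If no blow-up occurs, $\hat u^k\to\hat u^\infty$ in $C^0_{loc}$, and $\hat u^\infty$ is a solution of the limiting three–point problem, i.e. a conical metric on $S^2$ with orders $\beta_1,\beta_2,\beta_3\in(0,1)$. By the necessary part of Theorem \ref{Luo-Tian} such a metric forces $\sum_{\ell\ne j}\beta_\ell>\beta_j$; for $j=1$ this is $\beta_2+\beta_3>\beta_1$, which is violated by $\A2)$. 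Contradiction.

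The \emph{main obstacle} is the residual concentration at the high–order vertex $P_1^*$: since $\A6)$ gives $\beta_1+\beta_4>1$, one has $1-\beta_1<2\beta_4$, so the global mass does exceed the threshold $1-\beta_1$ and blow-up there is not excluded by mass counting alone. Here I would upgrade Lemma \ref{lem-poho} to the exact quantization of a simple singular bubble (local mass $2(1-\beta_1)$), subtract it from the rigid global mass $2\beta_4$, and use the balance imposed by $\A1)$ to see that the ``background'' limit would have to realize an inadmissible lower–mass (effectively two–point) configuration, again contradicting the necessary condition of Theorem \ref{Luo-Tian} together with Troyanov's obstruction for two conical points. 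Excluding this degenerate concentration, and showing the leftover mass cannot be realized, is the technical heart of the proof; everything else is the compactness bookkeeping described above.
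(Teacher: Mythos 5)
Your skeleton coincides with the paper's proof: argue by contradiction along a sequence $P_4=P_{4,k}$ with $|P_{4,k}|\to\infty$, record the conserved total mass $\frac{1}{2\pi}\int K e^{2u^k}dx=2\beta_4$, use a Kelvin transform together with $\A1)$ and the smallness $\A3)$ to show that no mass is lost near the collision of $P_4$ with infinity (this is the paper's Step~1, carried out via the Brezis--Merle-type Lemma \ref{bound-upper}), exclude concentration at regular points and at $P_2,P_3$ by comparing the total mass $2\beta_4$ with the quantized local masses, and, in the compact case, pass to the limiting three-point problem whose cone orders violate the necessary condition \eqref{cond-beta} of Theorem \ref{Luo-Tian} because of $\A2)$. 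Two small bookkeeping remarks: your normalization of the inverted solution ($-2\log|y|$ rather than $-2\beta_4\log|y|$ at infinity) introduces an extra $+\infty$ singularity of $\hat u^k$ at the origin, which is harmless but differs from the paper's choice; and before invoking the quantization of Lemma \ref{lem-poho} at an isolated blow-up point one must first verify the bounded-oscillation hypothesis on a surrounding circle, which the paper derives from the integral representation of $\bar u^k$.

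The genuine gap is in the step you yourself flag as the heart of the matter, namely excluding concentration at $P_1$. The mechanism you propose --- subtract a bubble of mass $2(1-\beta_1)$ from $2\beta_4$ and show that the leftover mass would force a ``background'' limit realizing an inadmissible two-point configuration, to be ruled out by Troyanov's obstruction --- does not reflect what actually happens and would not close the argument. In the two-dimensional Liouville setting the Brezis--Merle alternative leaves no room for a coexisting background: once concentration occurs at $P_1$ (the only admissible blow-up point, since $\beta_4+\beta_\ell<1$ for $\ell=2,3$ by $\A6)$ and regular points require mass $\geq 2$), the solutions tend to $-\infty$ locally uniformly on $\R^2\setminus\{P_1\}$, so $Ke^{2u^k}\to 0$ in $L^1_{loc}$ away from $P_1$. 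Combined with Step~1 this forces the \emph{entire} mass to concentrate at $P_1$, i.e.\ $2\beta_4=\sigma(P_1)=2(1-\beta_1)$ by the exact quantization (the remark after Lemma \ref{lem-poho}), and the contradiction is simply with the \emph{strict} inequality $\beta_1+\beta_4>1$ of $\A6)$. There is no residual two-point metric to which Troyanov's theorem could be applied, so as written your resolution of this case is not a proof; the correct argument is the mass-conservation one above.
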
\begin{proof} Assume by contradiction that there exists a sequence of solutions $(u^k)$  to \eqref{eq-non} with $$P_4=P_{4,k},\quad |P_4|\to\infty \quad \text{as }k\to\infty.$$ Notice that the assymptotic behavior  $$u^k(x)=-2\beta_4\log|x|+O_k(1)\quad\text{as }|x|\to\infty$$   is equivalent to  \begin{align*}\int_{\R^2}\frac{K_0(x)}{|x-P_4|^{2\beta_4}}e^{2u^k}dx=4\pi\beta_4,\quad K_0(x):=\prod_{\ell=1}^3\frac{1}{|x-P_\ell|^{2\beta_\ell}}.\end{align*}

\medskip
\noindent\textbf{Step 1} We have  $$\lim_{R\to\infty}\lim_{k\to\infty}\int_{B_R^c}\frac{K_0(x)}{|x-P_4|^{2\beta_4}}e^{2u^k(x)}dx=0.$$ To prove this we use Kelvin transform. Up to a small translation, we can  assume that none of  $P_1,P_2,P_3$  is the origin.   We set $$\tilde u^k(x):=u^k(\frac{x}{|x|^2})-2\beta_4\log|x|+c^k,\quad x\neq0,$$ for some $c^k\in\R$. Then setting    $Q_\ell:=\frac{P_\ell}{|P_\ell|^2}$  for $\ell=1,2,3,4$ we see that  $$-\D \tilde u^k(x)=\frac{1}{|x|^4}\prod_{\ell=1}^4\frac{1}{|\frac{x}{|x|^2}-\frac{Q_\ell}{|Q_\ell|^2}|^{2\beta_\ell}}e^{2u^k(\frac{x}{|x|^2})}\quad\text{in }\R^2\setminus\{0\}.$$ Using that $|x||y||\frac{x}{|x|^2}-\frac{y}{|y|^2}|=|x-y|$, $\A1)$,  and for suitably chosen $c^k$, we obtain \begin{align*}-\D \tilde u^k(x)=|x|^{2\beta_4}\prod_{\ell=1}^4\frac{1}{|x-Q_\ell|^{2\beta_\ell}}e^{2\tilde u^k(x)}\quad\text{in }\R^2\setminus\{0\} \\ \tilde u^k(x)=-2\beta_4\log|x|+O_k(1)\quad\text{as }|x|\to\infty.\end{align*}In fact, as $\tilde u^k=O_k(1)$  in $B_1$, it satisfies the above equation at the origin as well, that is, \begin{align*}-\D \tilde u^k(x)=\frac{|x|^{2\beta_4}}{|x-Q_4|^{2\beta_4}}f(x)e^{2\tilde u^k(x)}\quad\text{in }\R^2,\quad f(x):=\prod_{\ell=1}^3\frac{1}{|x-Q_\ell|^{2\beta_\ell}}.\end{align*} As $|P_4|\to\infty$, we have that  $Q_4\to0$. By $\A3)$ one gets   \begin{align}\label{non-vol}\int_{\R^2}\frac{|x|^{2\beta_4}}{|x-Q_4|^{2\beta_4}}f(x)e^{2\tilde u^k(x)}=4\pi\beta_4\leq 2\pi(1-\beta_4-\ve)\end{align} for some $\ve>0$.  Hence, by Lemma \ref{bound-upper} we obtain $$\tilde u^k\leq C\quad\text{in }B_\delta\quad\text{for some }\delta>0.$$ Step 1 follows immediately from the relation $$\int_{B_R^c}\frac{K_0(x)}{|x-P_4|^{2\beta_4}}e^{2u^k(x)}dx=\int_{B_\frac1R}\frac{|x|^{2\beta_4}}{|x-Q_4|^{2\beta_4}}f^k(x)e^{2\tilde u^k(x)}dx.$$

\medskip
\noindent\textbf{Step 2} No blow-up occurs on bounded domains, that is, for every $R>0$, $$u^k-\beta_4\log |P_4|\leq C(R)\quad\text{on }B_R.$$

Writing $\bar u^k=u^k-\beta_4\log|P_4|$ we see that $$-\D \bar u^k=K_0K_1e^{2\bar u^k}\quad\text{in }\R^2,\quad \int_{\R^2}K_0K_1e^{2\bar u^k}dx=4\pi\beta_4,$$ where $$K_0(x):=\prod_{\ell=1}^3\frac{1}{|x-P_\ell|^{2\beta_\ell}},\quad K_1:=\frac{|P_4|^{2\beta_4}}{|x-P_4|^{2\beta_4}}.$$ It follows that $K_1\to1$ in $C^0_{loc}(\R^2)$ as $k\to\infty$, and $K_0$ does not depend on $k$.

Assume by contradiction that $\bar u^k$ is not locally uniformly bounded from above. Then, as blow-up points are discrete, there exists $\delta>0$ such that $$\max_{B_\delta(x_0)}\bar u^k=\bar u^k(x^k)\to\infty,\quad x^k\to x_0,$$ for some $x_0\in \R^2$. If $x_0\not\in\{P_2,P_3,P_4\}$, then one can show that $$4\pi\beta_4\geq\lim_{r\to0}\lim_{k\to\infty}\int_{B_r(x_0)}K_0K_1e^{2\bar u^k}dx\geq 4\pi,$$ a contradiction as $\beta_4<1$. Thus, $x_0=P_{\ell_0}$ for some $\ell_0\in\{1,2,3\}$, and in fact, the set of all blow-up points is a subset of $\{P_1,P_2,P_3 \}$.  We fix $R>0$ such that $\bar B_{2R}(x_0)\cap\{P_1,P_2,P_3 \}=\{x_0\}$. Then $\bar u^k$ is uniformly bounded from above  in $B_{2R}(x_0)\setminus B_\frac R2(x_0)$.  Using this, and as
 $\bar u^k $ satisfies the integral equation $$\bar u^k(x)=\frac{1}{2\pi}\int_{\R^2}\log\left(\frac{1+|y|}{|x-y|}\right)K(y)e^{2\bar u^k(y)}dy+C^k,\quad K:=K_0K_1,$$ for some $C^k\in\R$, we get that  $$|\bar u^k(x)-\bar u^k(y)|\leq C\quad\text{for every }x,y\in \partial B_R(x_0).$$ Hence, by the remark after  Lemma \ref{lem-poho} we have (this can  be shown easily by a local Pohozaev type identity  to the above integral equation satisfied by $\bar u_k$) $$\sigma(x_0)=\lim_{r\to0}\lim_{k\to\infty}\frac{1}{2\pi}\int_{B_r(x_0)}K_0K_1e^{2\bar u^k}dx=2(1-\beta_{\ell_0}).$$ Thus  $2\beta_4\geq \sigma(x_0)=2(1-\beta_{\ell_0})$. This  and $\A6)$ imply  that $\ell_0=1$, that is, $P_1$ is the only blow-up point.    In particular, $\bar u^k\to-\infty$  locally uniformly outside $P_1$. Therefore, by Step 1 and \eqref{non-vol}     we get $$2\beta_4=\frac{1}{2\pi}\lim_{k\to\infty}\int_{\R^2}K_0K_1e^{2\bar u^k}dx=\sigma(x_0)=2(1-\beta_1),$$ a contradiction to $\A6)$.   This finishes Step 2.

\medskip

Since $\bar u^k$ is locally uniformly bounded from above, up to a subsequence, either $\bar{u}^k \to \infty$ locally uniformly, or $\bar u^k\to\bar u$ in $C_{loc}^0(\R^2)$. In the first case we get a  contradiction to $$\int_{\R^2}K_0K_1e^{2\bar u^k}dx=4\pi\beta_4,\quad \lim_{R\to\infty}\lim_{k\to\infty}\int_{B_R^c}K_0K_1e^{2\bar u^k}dx=0,$$ thanks to Step 1. Therefore, only the later case can occur, and the limit function $\bar u$ satisfies $$-\D \bar u=K_0e^{2\bar u}\quad\text{in }\R^2,\quad K_0=\prod_{\ell=1}^3\frac{1}{|x-P_\ell|^{2\beta_\ell}}.$$ Again by Step 1, we have that $$\int_{\R^2}K_0e^{2\bar u}dx=4\pi\beta_4,$$ which is equivalent to $$\bar u(x)=-2\beta_4\log|x|+O(1)\quad\text{as }|x|\to\infty.$$ Thus, $$w(x):=\bar u(x)-\sum_{\ell=1}^3\beta_\ell \log|x-P_\ell|$$ satisfies \eqref{eq-w-3} with $m=3$, where $\beta_1,\beta_2,\beta_3$ satisfy $\A2)$. This contradicts the necessary condition \eqref{cond-beta} in Theorem \ref{Luo-Tian}.
\end{proof}

\begin{rem}Problem \eqref{eq-non} is super critical under the assumptions $\A1)$ and $\A2)$. To be more precise, if one uses fixed point arguments (as described in Section \ref{sec-Lio}) to prove the lemma, then one would not be able to rule-out a  blow-up phenomena around  the point $P_1$. This is due to the fact that the energy of a singular bubble at $P_1$ is $4\pi(1-\beta_1)$, which is smaller than   the total energy $4\pi\beta_4$.

The super criticality of the  Problem \eqref{eq-non} under  $\A1)$ and $\A2)$  can also be seen from the point of view of singular Moser-Trudinegr inequality, see e.g. \cite{Adi,BM,Malchiodi-Ruiz,WXChen,Tr} and the references therein.   \end{rem}

Now we are in a position to prove non-existence of solution to the Toda system  \eqref{eq-w-toda}-\eqref{behavior-w-toda} for the choice of $\{\beta_{i,\ell}\}$ as in \eqref{beta-14}. More precisely, we have:
%Now we show that  an assumption of the from \eqref{cond-beta-like} is not sufficient for the existence of solutions to the $SU(3)$ Toda system. The main idea is that after a blow-up analysis, the Toda system would be reduced to a single Liouville equation as in Lemma \ref{lem-non}, which does not have a solution.

\begin{lem} \label{lem-non-toda}Let $\beta_\ell\in (0,1)$ with $\ell=1,\dots, 7$ be such that  $\A1)$ to $\A5)$ hold. Let $\{\beta_{i,\ell}:i=1,2,\,\ell=1,\dots, 7\}$ be as in \eqref{beta-14}. Let  $P_1,\dots,P_4$ be such that Problem \eqref{eq-non} has no solution. Let $P_5$ be a fixed point (different from $P_1,\dots, P_4$).  Then for $|P_6|, |P_7|$ large ($P_6\neq P_7$) there exists no solution to    \eqref{eq-u-toda} with $m=7$   such that $$u_i(x)=-\beta_4\log|x|+O(1)\quad\text{as }|x|\to\infty,\quad i=1,2.$$ \end{lem}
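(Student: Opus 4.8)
The plan is to argue by contradiction and to exploit that, as $|P_6|,|P_7|\to\infty$, the second component decouples: the entire mass of $u_2$ escapes to infinity along $P_6,P_7$, so that $u_1$ converges to a solution of the scalar problem \eqref{eq-non}, which by hypothesis does not exist. For the bookkeeping, $\A1)$ gives $\beta_1=2-\sum_\ell\beta_{1,\ell}=\beta_4$ and $\A4)$ gives $\beta_2=\beta_4$; hence $\bar\beta_1=\bar\beta_2=\beta_4$ and, by \eqref{int-cond-u},
$$\int_{\R^2}K_1e^{2u_1}\,dx=\int_{\R^2}K_2e^{2u_2}\,dx=2\pi\beta_4,$$
so each component carries total blow-up value $\beta_4$, which is $<1$ by $\A3)$. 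Assume, for contradiction, that for some sequence $P_6^k,P_7^k$ with $|P_6^k|,|P_7^k|\to\infty$ there exist solutions $(u_1^k,u_2^k)$ of \eqref{eq-u-toda} (with $m=7$) satisfying $u_i^k=-\beta_4\log|x|+O_k(1)$.

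The first and crucial step is to show that the $u_2$-mass does not survive on compact sets, i.e.
$$\lim_{R\to\infty}\lim_{k\to\infty}\int_{B_R}K_2e^{2u_2^k}\,dx=0.$$
Since $K_2(x)=|x-P_5|^{-2\beta_5}\prod_{\ell=6,7}|x-P_\ell^k|^{-2\beta_\ell}$ and $P_6^k,P_7^k\to\infty$, the last product tends to $0$ uniformly on every $B_R$, so $K_2\to0$ in $L^1_{\mathrm{loc}}(\R^2)$, and it suffices to exclude concentration of $u_2^k$ on compact sets. Because the total value of each component is $\beta_4<1$, one has $\sigma_1(P)<1$ at every $P$, so by Lemma \ref{lem-poho} any blow-up at an interior regular point (in particular at $P_1,\dots,P_4$, which are regular for $K_2$ since $\beta_{2,\ell}=0$) would satisfy $\sigma_2\ge1$, and any blow-up at $P_5$ would satisfy $\sigma_2\ge1-\beta_5$. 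Both are impossible, since $\beta_4<1$ and, by $\A5)$, $\beta_4<1-\beta_5$. Thus $u_2^k$ does not concentrate, is locally bounded above, and the claim follows from $K_2\to0$. In particular full blow-up of the system is excluded here directly by the smallness of the $u_2$-value, without appealing to \eqref{cond-beta-like}.

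In the second step I pass to the limit in the equation for $u_1$. On every $B_R$ the coupling $K_2e^{2u_2^k}\to0$ in $L^1$, so $u_1^k$ solves $-\Delta u_1^k=2K_1e^{2u_1^k}+o(1)$. The escaping $u_2$-mass sits near $P_6^k,P_7^k$ and, in the representation \eqref{eq-int-u}, contributes on compact sets only a large additive constant (for bounded $x$ and $y\approx P_\ell^k$, $\log\frac1{|x-y|}=-\log|P_\ell^k|+o(1)$); this constant is automatically balanced by the free constant in \eqref{eq-int-u}, because the mass $\int K_1e^{2u_1^k}=2\pi\beta_4$ is anchored at the fixed points $P_1,\dots,P_4$, so $u_1^k$ remains at a bounded scale. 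Escape of the $u_1$-mass to infinity is ruled out as in Step 1 of Lemma \ref{lem-non} using $\A3)$ (near $P_6^k,P_7^k$ the negative coupling depresses $u_1^k$ and $K_1e^{2u_1^k}=O(|x|^{-4})$ controls the rest). For concentration on compact sets I follow Step 2 of Lemma \ref{lem-non}: a regular-point bubble needs $\sigma_1\ge1>\beta_4$, and at $P_\ell$ it needs $\sigma_1\ge1-\beta_\ell$, which exceeds $\beta_4$ for $\ell=2,3$ by $\A6)$ and for $\ell=4$ by $\A3)$, so the only candidate is $P_1$. A bubble at $P_1$ cannot coexist with a nontrivial background, since the latter would behave like $-2(1-\beta_1)\log|x-P_1|$, rendering $K_1e^{2\bar u_1}\sim|x-P_1|^{-4+2\beta_1}$ non-integrable ($\beta_1<1$); hence all mass would concentrate at $P_1$, forcing $2\beta_4=2(1-\beta_1)$ and contradicting the strict inequality $\beta_4+\beta_1>1$ of $\A6)$. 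Therefore $u_1^k$ converges to $\bar u_1$ solving $-\Delta\bar u_1=2K_1e^{2\bar u_1}$ with $\int 2K_1e^{2\bar u_1}=4\pi\beta_4$, i.e. $\bar u_1=-2\beta_4\log|x|+O(1)$. Setting $v:=\bar u_1+\tfrac12\log2$ yields $-\Delta v=K_1e^{2v}$ with $v=-2\beta_4\log|x|+O(1)$, a continuous solution of \eqref{eq-non} for the fixed $P_1,\dots,P_4$, contradicting the hypothesis.

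The hardest step is the control of $u_1$ at $P_1$: as $\beta_4+\beta_1>1$, the available mass $2\beta_4$ strictly exceeds the energy $2(1-\beta_1)$ of a singular bubble there, so $P_1$ is genuinely supercritical — exactly the obstruction described in the Remark following Lemma \ref{lem-non} — and excluding it relies on the strict inequality in $\A6)$ together with the non-integrability of the residual profile. The other delicate ingredient is the decoupling of $u_2$, whose concentration is prevented precisely by $\A5)$.
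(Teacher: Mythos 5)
Your overall strategy (decouple $u_2$, pass to the limit in the $u_1$-equation, and contradict the assumed nonexistence of solutions to \eqref{eq-non}) is the paper's strategy, and your Step 2 is essentially the paper's treatment of $u_1$. The gap is in your Step 1, the decoupling itself. You argue: the unnormalized $K_2\to 0$ in $L^1_{loc}$, and $u_2^k$ does not concentrate (by Lemma \ref{lem-poho} with $\A3)$ and $\A5)$), hence $u_2^k$ is locally bounded above and $\int_{B_R}K_2e^{2u_2^k}\,dx\to0$. But ``no quantized concentration'' does not yield a $k$-uniform upper bound on the \emph{unnormalized} $u_2^k$. The natural normalization is $\tilde u_2^k:=u_2^k-\beta_6\log|P_6^k|-\beta_7\log|P_7^k|$, for which $K_2e^{2u_2^k}=K_2^{norm}e^{2\tilde u_2^k}$ with $K_2^{norm}\to \bar K_2:=|x-P_5|^{-2\beta_5}>0$ locally uniformly; Lemmas \ref{bound-upper} and \ref{lem-poho} (which require the coefficient bounded away from zero) apply to $\tilde u_2^k$, not to $u_2^k$. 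Excluding blow-up of $\tilde u_2^k$ leaves open the alternative that $\tilde u_2^k\to\bar u_2$ in $C^0_{loc}$ with $\bar u_2$ finite. In that scenario the unnormalized $u_2^k\to+\infty$ uniformly on compacts, $\int_{B_r(P)}K_2e^{2u_2^k}\to\int_{B_r(P)}\bar K_2e^{2\bar u_2}>0$ with no quantization anywhere, and your claimed vanishing of the local $u_2$-mass is false. Your invocation of $\A5)$ only excludes a quantized bubble at $P_5$; it does not touch this ``surviving second component'' case, which is exactly case i) of the paper's Step 4, where the limit solves the genuinely coupled system \eqref{lim-eq} and no contradiction with \eqref{eq-non} is obtained. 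Since your Step 2 takes $K_2e^{2u_2^k}\to0$ in $L^1_{loc}$ as input, the gap propagates through the whole proof.

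The paper closes this case by a separate argument that you would need to add: if $\tilde u_2^k\to\bar u_2$ with $\int_{\R^2}\bar K_2e^{2\bar u_2}\,dx=2\pi\gamma$ and $0<\gamma\le\beta_4$, the limiting integral representation gives $\bar u_2(x)=-(2\gamma-\beta_4)\log|x|+o(\log|x|)$ as $|x|\to\infty$, and integrability of $\bar K_2e^{2\bar u_2}$ at infinity then forces $\beta_5+2\gamma-\beta_4>1$, hence $\beta_4+\beta_5\ge \beta_5+2\gamma-\beta_4>1$, contradicting $\A5)$. With this step inserted, the remainder of your argument (escape of $u_1$-mass ruled out via the Kelvin transform and $\A3)$; only $P_1$ as a possible blow-up point for $u_1$; the strict inequality in $\A6)$ killing a single bubble at $P_1$) matches the paper and is sound.
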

\begin{proof} We assume by contradiction that   there is a sequence of solutions  $(u_i^k)$ with $$P_\ell=P_{\ell,k},\quad |P_\ell|\xrightarrow{k\to\infty}\infty\quad\text{for }\ell=6,7,$$  that is, $u_i^k$   satisfies \begin{align}\left\{\begin{array}{ll} -\D u_1^k=2K_1e^{2u_1^k}-K_2e^{2u^k_2} &\quad\text{in }\R^2\\-\D u^k_2=2K_2e^{2u^k_2}-K_1e^{2u^k_1}&\quad\text{in }\R^2\\ \rule{0cm}{.6cm}\int_{\R^2}K_ie^{2u^k_i}dx=2\pi\beta_4 &\quad i=1,2 \\ \rule{0cm}{.5cm} |P_\ell|\xrightarrow{k\to\infty}\infty &\quad \ell=6,7,\end{array}\right.\end{align} where $$K_1(x):=\prod_{\ell=1}^4\frac{1}{|x-P_\ell|^{2\beta_\ell}},\quad K_2(x):=\prod_{\ell=5}^7\frac{|P_6|^{2\beta_6}|P_7|^{2\beta_7}}{|x-P_\ell|^{2\beta_\ell}}. $$ Notice that $K_1$ does not depend on $k$, $K_1\in L^1(\R^2)$, thanks to the assumption $\beta_4<1$, and $$K_2\to |x-P_5|^{-2\beta_5}\text{ locally uniformly in }\R^2\setminus\{P_5\}\quad\text{ as }k\to\infty.$$

  We claim that $u_1^k\to u$ locally uniformly in $\R^2$, where $u$ satisfies \begin{align}\label{lim-eq-single}-\D u=2K_1e^{2u}\quad\text{in }\R^2,\quad \int_{\R^2}K_1e^{2u}dx=2\pi\beta_4.\end{align} Then one can show that $u(x)=-2\beta_4\log|x|+O(1)$ as $|x|\to\infty$. In particular, $\bar u(x)=u(x)+\frac12\log2$ is a solution to the Problem \eqref{eq-non}, a contradiction to  our assumption on $P_1,\dots,P_4$ that  the Problem \eqref{eq-non} has no solution.

We prove the claim in few steps.

\medskip

\noindent \textbf{Step 1} We have $$\lim_{R\to\infty}\lim_{k\to\infty}\int_{B_R^c}K_1e^{2u_1}dx=0.$$

The proof is very similar to that of Step 1 in Lemma \ref{lem-non}. Here we give a sketch of it.

 We set $$\tilde u_1^k(x)=u^k_1(\frac{x}{|x|^2})-\beta_4\log|x|+c^k,$$ so that $\tilde u^k_1$ satisfies ($\tilde K$ does not depend on $k$) $$-\D \tilde u^k_1=\tilde Ke^{2\tilde u^k_1}-g^k\quad\text{in }\R^2, \quad \int_{\R^2}\tilde Ke^{2\tilde u^k}dx=4\pi\beta_4, \quad\int_{\R^2}g^kdx=2\pi\beta_4,  $$ $$g^k,\tilde K>0\quad\text{in }\R^2,\quad \tilde K(x)\xrightarrow{|x|\to0}1.$$ Now we can apply Lemma \ref{bound-upper}  with $\beta=0$,  thanks to the assumption $\A3)$, to   get that $\tilde u^k_1\leq C$ in a neighborhood of the origin.   Step 1 follows.

\medskip

Setting $$S_i:=\{x\in\R^2:\text{ there is a sequence }x^k\to x\text{ such that }u_i^k(x^k)\to\infty\},\quad i=1,2,$$ we   shall show that $S_1\cup S_2=\emptyset$. We start with:

\noindent \textbf{Step 2} $S_1\subseteq\{P_1,\dots,P_4\}$ and  $S_2\subseteq\{P_5\}$.

For $x_0\in S_1\cup S_2$ we can write    $$ K_i(x) =\frac{c_i+o(1)}{|x-x_0|^{2\alpha_i}}, \quad c_i>0,\quad o(1)\xrightarrow{x\to x_0}0,\quad i=1,2,$$ where $\alpha_1\in\{0,\beta_1,\dots,\beta_4\}$, $\alpha_2\in\{0,\beta_5\}$ and $\alpha_1\alpha_2=0$.  By Lemma \ref{bound-upper} and $\A3)$ one gets $S_1\subseteq\{P_1,\dots,P_4\}$ and $S_2\subseteq\{P_5\}$.

\medskip

\noindent \textbf{Step 3} $S_1\cup S_2=\emptyset$.

It is well-known that $u^k_i$ satisfies the integral equation $$u^k_i(x)=\frac{1}{2\pi}\int_{\R^2}\log\left(\frac{1+|y|}{|x-y|}\right)\left(2K_i(y)e^{2u^k_i(y)}-K_{3-i}(y)e^{2u^k_{3-i}(y)}\right)dy+C^k,\quad i=1,2.$$ For $x_0\in S_1\cup S_2$ let $R>0$ be such that $\bar B_R(x_0)\cap (S_1\cup S_2)=\{x_0\}$, and $x_0$ is the only singularity for $K_1,K_2$ on $\bar B_R(x_0)$. Then, from the above integral representation, one can show that $$|u^k_i(x)-u^k_i(y)|\leq C\quad \text{for every }x,y\in\partial B_R(x_0),\quad i=1,2.$$ In particular, $u^k_i$ and $K_i$  satisfy all the assumptions in Lemma \ref{lem-poho}. Therefore, if  $S_2=\{P_5\}$, then as $\sigma_1(P_5)=0$, we must have  $\sigma_2(P_5) =1-\beta_5$. This implies that $$\beta_4\geq \sigma_2(P_5)=1-\beta_5,$$  a contradiction to $\A5)$. Hence, $S_2=\emptyset$.

Now we assume that $\beta_{\ell_0}\in S_1$ for some $\ell_0\in \{1,\dots,4\}$. Then, in a similar way we get  that $\beta_4 \geq 1-\beta_{\ell_0}$. In fact, by $\A6)$, a strict inequality holds, that is, $\beta_4> 1-\beta_{\ell_0}$.  Since $$u_1^k\to-\infty \quad\text{locally uniformly in }\R^2\setminus S_1,  $$ we must have that the cardinality of $S_1$ is at least $2$, thanks to   Step 1. Taking $P_{\ell_1}\in S_1$ with $\ell_1\in\{1,\dots,4\}\setminus\{\ell_0\}$, and again using that $\sigma({P_{\ell_1}})= 1-\beta_{\ell_1}$, we obtain $$\beta_4\geq \sigma(P_{\ell_0})+\sigma(P_{\ell_1})=2-\beta_{\ell_0}-\beta_{\ell_1},$$ a contradiction to $\A1)$.

 We conclude Step 3.

\medskip
\noindent \textbf{Step 4} $u_1^k\to\bar u_1$ in $C^0_{loc}(\R^2)$ where $\bar u_1$ satisfies  \eqref{lim-eq-single}.% \begin{align}\label{lim-eq-single}-\D \bar u_1=2 K_1e^{2\bar u_1}\quad\text{in }\R^2, \quad \int_{\R^2}K_1e^{2\bar u_1}dx= 2\pi\beta_1. \end{align}

Since $S_1\cup S_2=\emptyset$, up to a subsequence,  one of the following holds: \begin{itemize}\item[i)]  $u_i^k\to\bar u_i$ in $C^0_{loc}(\R^2)$ for $i=1,2$  \item[ii)]  $u_1^k\to\bar u_1$ in $C^0_{loc}(\R^2)$  and $u_2^k\to-\infty$ locally uniformly in $\R^2$  \item[iii)] $u_2^k\to\bar u_2$ in $C^0_{loc}(\R^2)$  and $u_1^k\to-\infty$ locally uniformly in $\R^2$ \item[iv)] $u_i^k\to-\infty$ locally uniformly in $\R^2$ for $i=1,2$.\end{itemize} It follows from Step 1, and the integral condition $\int_{\R^2}K_1e^{2u_1}dx=2\pi\beta_4$ that  either $i)$ or $ii)$ holds, and $\bar u_1$ satisfies the integral condition $$\int_{\R^2}K_1e^{2\bar u_1}dx=2\pi\beta_4.$$

Now we assume by contradiction that $i)$ holds. Then the limit functions $(\bar u_1,\bar u_2)$ satisfy the system  \begin{align}\label{lim-eq}\left\{\begin{array}{ll} -\D \bar u_1=2 K_1 e^{2\bar u_1 }-\bar K_2 e^{2\bar u_2 }&\quad\text{in }\R^2\\ \rule{0cm}{.7cm}-\D \bar u_2 =2\bar K_2e^{2\bar u_2 }- K_1 e^{2\bar u_1 } &\quad\text{in }\R^2\\ \rule{0cm}{.6cm}\int_{\R^2} K_1e^{2\bar u_1}dx=2\pi\beta_4,\quad  \int_{\R^2} \bar K_2e^{2\bar u_2}dx=:2\pi\gamma\leq2\pi\beta_4,  \end{array}\right.\end{align} where $\bar K_2(x):=|x-P_5|^{-2\beta_5}$ is the limit of $K_2$ as $k\to\infty$. Then one has $$\lim_{|x|\to\infty}\frac{\bar u_2(x)}{\log|x|}=-(2\gamma-\beta_4),$$ and together with $\bar K_2e^{2\bar u_2}\in L^1(\R^2)$ we have $\beta_5+2\gamma-\beta_4>1$. Hence, $\beta_4+\beta_5>1$, a contradiction to $\A5)$.

Thus, $ii)$ holds, and \eqref{lim-eq} reduces to a single equation \eqref{lim-eq-single}.

 We conclude the lemma.
\end{proof}

\section{Higher order singular Liouville equation}\label{sec-Lio}

The proof of  Theorem \ref{thm1} is very similar to that of Theorem \ref{thm-toda} (see also \cite{HMM}). Here we give a sketch of it.

Writing $$w(x)=u(x)-\sum_{\ell=1}^m\beta_\ell\log|x-P_\ell|,$$ Theorem \ref{thm1} is equivalent to prove the existence of solution $u\in C^0(\R^n)$ to
\begin{align}\label{eq-u}  (-\D)^\frac{n}{2} u=Ke^{nu}\quad\text{in }\R^n,\quad K(x):=\prod_{\ell=1}^m\frac{1}{|x-P_\ell|^{n\beta_\ell}},\end{align} satisfying the asymptotic behavior \begin{align}\label{asymp-u}u(x)=-\beta\log|x|+O(1)\quad\text{as }|x|\to\infty,\quad \beta:=2-\sum_{\ell=1}^m\beta_\ell.\end{align}

As before we fix  $u_0\in C^\infty(\R^n)$ such that $u_0(x)=-\log|x|$ for $ |x|\geq 1$, and  we look for a solution $u$ to \eqref{eq-u} of the form $$u=\beta u_0+v+c, $$ where $c$ is a normalizing constant and $v\in X$, where  $$X:=C_0({\R^n})=\{v\in C^0(\R^n):v(x)\xrightarrow{|x|\to\infty}0 \},\quad \|v\|:=\max_{x\in \R^n}|v(x)|.$$  Then $u$ satisfies \eqref{eq-u} if and only if    $v=u-\beta u_0-c$ satisfies \begin{align} (-\D)^\frac{n}{2}  v=\bar K e^{nv+c}-\beta(-\D)^\frac n2 u_0\quad\text{in }\R^n,\quad \bar K:=Ke^{n\beta u_0}.\end{align}  The function   $\bar K$ satisfies  \begin{align} \label{asymp-barK} \lim_{|x|\to\infty}|x|^{2n}\bar K(x)=1.\end{align} For $v\in X$,  we fix     $c_v\in\R$ so that \begin{align}\label{def-cv} \int_{\R^n}\bar K(x)e^{n(v(x)+c_v)}=\beta \gamma_n.\end{align} We define a compact operator  $$T:X\to X,\quad v\mapsto\bar v,$$ \begin{align}\label{def-barv}\bar v(x):=\frac{1}{\gamma_n}\int_{\R^n}\log\left(\frac{1}{|x-y|}\right)\bar K(y)e^{n(v(y)+c_v)}dy-\beta u_0(x),\quad x\in\R^n. \end{align} It follows that $\bar v\in C^0(\R^n)$ (in fact, H\"older continuous), and by \eqref{def-cv}$$\bar v(x)=\frac{1}{\gamma_n}\int_{\R^n}\log\left(\frac{|x|}{|x-y|}\right)\bar K(y)e^{n(v(y)+c_v)}dy\quad\text{for }|x|>1.$$

We claim that   there exists $C>0$ such that \begin{align}\label{claim1}\|v\|_X\leq C\quad \text{for every }(v,t)\in X\times [0,1]\quad\text{satisfying }v=tT(v).  \end{align} Then by Schauder fixed point theorem the operator $T$ has a fixed point $v$ in $X$, and consequently we get  a continuous solution to \eqref{eq-u} satisfying \eqref{asymp-u}.

To prove \eqref{claim1} we assume by contradiction that there exists $(v^{k},t^{k})\in X\times [0,1]$ such that  $\|v^{k}\|_X\to\infty$ and  $v^{k}=t^{k} T(v^{k})$, that is \begin{align}\label{vk}  v^{k}(x)=\frac{t^{k}}{\gamma_n}\int_{\R^n}\log\left(\frac{1}{|x-y|}\right)\bar K(y)e^{n(v^{k}(y)+c_{v^{k}})}dy-t^{k}\beta u_0(x).\end{align}  Then we can choose  $x^k\in\R^n$ so that    $$\sup_{x\in\R^n}\psi^{k}(x)\leq \psi^{k}(x^{k})+1\xrightarrow{k\to\infty}\infty,\quad  \psi^{k}(x):=v^{k}(x)+c_{v^{k}}+\frac1n\log t^{k}.$$   The  crucial ingredients to obtain a contradiction are  Lemma \ref{thmA},  and  the relation  \begin{align}\label{est-beta} \beta=2-\sum_{\ell=1}^m\beta_\ell=2-\beta_\ell-\sum_{\ell\neq j}\beta_\ell<2(1-\beta_j)\quad\text{for every }j=1,2,\dots,m,\end{align} which follows from  the second condition in \eqref{cond-beta}.   Up to a subsequence,    we distinguish  the  following two cases:

\medskip
\noindent\textbf{Case 1} $x^k\to x^\infty\in \R^n\setminus\{P_\ell:\ell=1,2,\dots.m\}.$

In a small neighborhood of $x^\infty$ we have for some $c_0>0$ $$\bar K(x)=\frac{c_0+o(1)}{|x-x^\infty|^{n\alpha}},\quad o(1)\xrightarrow{x\to x^\infty}0, $$ where $\alpha\in \{0,\beta_1,\dots,\beta_m\}$.  Using \eqref{vk}-\eqref{est-beta} one gets a contradiction as in \cite{HMM}, see also \cite{Aviles, WY}.

\medskip
\noindent\textbf{Case 2} $|x^{k}|\to \infty$.

Setting  $$\tilde \psi^{k}(x):=\psi^{k}(\frac{x}{|x|^2}),\quad \tilde x^k:=\frac{x^k}{|x^k|^2}\to0,$$ we obtain  $ \tilde \psi_k(\tilde x^k)\to\infty$, and $\tilde \psi_k$ satisfies  \begin{align*} \tilde \psi^{k}(x)=\frac{1}{\gamma_n}\int_{\R^n}\log\left(\frac{|y|}{|x-y|}\right)\tilde K(y)e^{n\tilde \psi^{k}(y)}dy+c^k\quad \text{in }B_1,\end{align*} where $$\tilde K(x):=\frac{1}{|x|^{2n}}\bar K(\frac{x}{|x|^2}),\quad c^k:=c_{v^{k}}+\frac1n\log t^{k}.$$ Note that $\tilde K$ is smooth around the origin and $$\tilde K(x)\xrightarrow{|x|\to0}1, $$ one can proceed as in Case 1.
Thus, $\psi^{k}\leq C$ on $\R^n$, and we have \eqref{claim1}.

\section{Some useful lemmas}
The following lemma is a generalizations of Brezis-Merle \cite{Brezis-Merle} type results, compare \cite[Theorem 5]{Bar-Tar2}.
\begin{lem} \label{bound-upper}Let $(u^k)$ be a sequence of solutions to $$-\D u^k=\frac{f^k(x)}{|x|^{2\alpha}}e^{2u^k}-g^k\quad \text{in }B_1,\quad  \int_{B_1}\frac{f^k(x)}{|x |^{2\alpha}}e^{2u^k}dx\leq 2\pi(1-\alpha-\ve),$$ for some $\ve>0$ and  $\alpha\in [0,1)$. Assume that   $g^k\geq0$,  $\|g^k\|_{L^1(B_1)}\leq C$, $0\leq f^k\leq C$ and   $\inf _{B_1\setminus B_\delta}f^k\geq C_\delta^{-1}$ for some $0<\delta<\frac13$.  Then $u^k$ is locally uniformly bounded from above in $B_1$. \end{lem}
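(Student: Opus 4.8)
The plan is to prove the upper bound locally, i.e.\ near an arbitrary point $x_0\in B_1$, distinguishing the regular points $x_0\neq 0$ from the singular point $x_0=0$. Write $F^k:=\frac{f^k}{|x|^{2\alpha}}e^{2u^k}\ge 0$, so that $-\D u^k=F^k-g^k$ with $\int_{B_1}F^k\le 2\pi(1-\alpha-\ve)$ and $\|g^k\|_{L^1}\le C$. On a ball $B=B_r(x_0)$ compactly contained in $B_1$ I decompose $u^k=\phi_1^k-\phi_2^k+h^k$, where $-\D\phi_1^k=F^k$ and $-\D\phi_2^k=g^k$ in $B$ with zero boundary data, and $h^k$ is harmonic with $h^k=u^k$ on $\partial B$. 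Since $F^k,g^k\ge 0$, the maximum principle gives $\phi_1^k\ge 0$ and $\phi_2^k\ge 0$; in particular $u^k\le\phi_1^k+h^k$, so the term $-g^k$ (equivalently the potential $\phi_2^k$) is harmless for an upper bound and may be discarded. It remains to bound $\phi_1^k$ and $h^k$ from above.

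For the harmonic part I would first fix $\delta\in(0,\frac13)$ with $x_0\notin B_{2\delta}$ when $x_0\neq 0$, and then choose the radius $r$ of $B$ so that $\partial B\subset B_1\setminus B_\delta$. On this region the hypotheses $|x|^{-2\alpha}\ge 1$ (valid since $\alpha\ge 0$ and $|x|\le 1$) and $\inf_{B_1\setminus B_\delta}f^k\ge C_\delta^{-1}$ give $e^{2u^k}\le C_\delta F^k$, hence $e^{2u^k}$ is uniformly bounded in $L^1(B_1\setminus B_\delta)$. By a standard co-area (Fubini) argument I may then choose the radius $r=r_k$ so that $\int_{\partial B_{r_k}}e^{2u^k}\le C$ uniformly. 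Representing the harmonic function by its (nonnegative) Poisson integral and using $h^k=u^k$ on $\partial B_{r_k}$ together with $(u^k)^+\le\frac12 e^{2u^k}$ then yields $\sup_{B_{r_k/2}}h^k\le C$.

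The heart of the matter is the potential $\phi_1^k$, where the subcriticality of the mass enters. Classical Brezis--Merle \cite{Brezis-Merle} applied to $\phi_1^k$ (the potential of the nonnegative function $F^k$ of mass $\le 2\pi(1-\alpha-\ve)$) gives $e^{\lambda\phi_1^k}\in L^1(B)$ uniformly for every $\lambda<\frac{2}{1-\alpha-\ve}$. Since $u^k\le\phi_1^k+h^k$ and $h^k$ is bounded above, one has $F^k\le C|x|^{-2\alpha}e^{2\phi_1^k}$; a H\"older inequality against the singular weight, splitting $\int|x|^{-2q\alpha}e^{2q\phi_1^k}$ into a power of $\int e^{2qs\phi_1^k}$ and a power of $\int|x|^{-2q\alpha s'}$, upgrades this to $F^k\in L^q(B)$ for some $q>1$. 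This is feasible \emph{precisely} because of the strict deficit $\ve$: at $q=1$ and the borderline exponent $s=\frac{1}{1-\alpha-\ve}$ one computes $2q\alpha s'=\frac{2\alpha}{\alpha+\ve}<2$, so both integrals converge, and the inequalities survive for $q$ slightly above $1$ by continuity. (Equivalently, this is the weighted Brezis--Merle threshold $2\pi(1-\alpha)$, cf.\ \cite[Theorem 5]{Bar-Tar2}.) With $F^k\in L^q$, elliptic regularity and the embedding $W^{2,q}\hookrightarrow L^\infty$ in dimension two bound $\phi_1^k$ in $L^\infty(B_{r_k/4})$, and then $u^k\le\phi_1^k+h^k\le C$ there; covering a compact subset of $B_1$ by finitely many such balls completes the proof.

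The step I expect to be the main obstacle is obtaining the uniform $L^q$-bound ($q>1$) on the right-hand side $F^k$ near the singular point $x_0=0$. There the weight $|x|^{-2\alpha}$ is unbounded, so the bare classical threshold $2\pi$ is not enough and one must exploit the full strength of the hypothesis: the H\"older balancing above closes only because the mass deficit $\ve$ leaves room to absorb the extra singularity $|x|^{-2q\alpha}$ coming from the weight. Away from the origin this difficulty disappears (the weight is bounded and the mass is already below the classical threshold $2\pi$), so the singular point is genuinely the only delicate case.
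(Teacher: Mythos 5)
Your argument is correct and rests on the same two pillars as the paper's proof: the strict mass deficit $2\pi(1-\alpha-\ve)$ is what allows the exponential of the Newtonian potential of $F^k$ to be integrated against the singular weight $|x|^{-2\alpha}$ with some exponent $q>1$ (your balancing $\frac{2\alpha}{\alpha+\ve}<2$ is exactly the point where the paper takes $p=\frac{1}{1-\alpha-\ve/2}$ and its conjugate), and the harmonic remainder is bounded above using only the uniform $L^1$ control of $e^{2u^k}$ on $B_1\setminus B_\delta$ that comes from $\inf_{B_1\setminus B_\delta}f^k\geq C_\delta^{-1}$ together with $t^+\leq \frac12 e^{2t}$. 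The implementations differ in detail: the paper works globally on $B_1$ with the two-term splitting $u^k=v^k+h^k$, where $v^k$ is the explicit log-potential of the full right-hand side $F^k-g^k$ (the sign of $g^k$ makes it harmless there as well, since the kernel $\log\frac{2}{|x-y|}$ is nonnegative on $B_1\times B_1$); the exponential integrability of $v^k$ is obtained by Jensen's inequality rather than by invoking Brezis--Merle as a black box; the harmonic part is controlled through an $L^1$ bound on $(h^k)^+$, the mean value property on annuli contained in $B_1\setminus B_\delta$, and a Harnack-type dichotomy on $\rho^k=\int_{B_{1/2}}|h^k|\,dx$; and the final bound $v^k\leq C$ is read off directly from the representation formula by H\"older, with no need for $W^{2,q}$ elliptic estimates. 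Your version (Dirichlet decomposition on a sub-ball, discarding $\phi_2^k$ by the maximum principle, Poisson kernel plus a good-radius selection for $h^k$, then $W^{2,q}\hookrightarrow L^\infty$ for $\phi_1^k$) is an equally valid packaging of the same scheme. One small caveat: $\delta$ is prescribed by the hypothesis and can only be enlarged, not shrunk, so for a nonzero $x_0\in B_{2\delta}$ you cannot literally ``fix $\delta$ with $x_0\notin B_{2\delta}$''; this is harmless, since the ball centered at the origin --- whose radius $r_k$ may be selected anywhere in an interval $(r_1,r_2)$ with $\delta<r_1<r_2<1$, and on which the Poisson bound holds in $B_{\theta r_k}$ for any $\theta<1$ --- already delivers the upper bound on any prescribed compact subset of $B_1$.
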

\begin{proof} We write $u^k=v^k+h^k$, where $h^k$ is harmonic in $B_1$ and  $$v^k(x):=\frac{1}{2\pi}\int_{B_1}\log\left(\frac{2}{|x-y|}\right)\left(\frac{f^k(y)}{|y|^{2\alpha}}e^{2u^k(y)}-g^k(y)\right)dy.$$   Since $g_k\geq 0$, by Jensen's inequality one gets that $$\int_{B_1}e^{2pv^k(x)}dx\leq C(p),\quad p\in [1,\frac{1}{1-\alpha-\ve/2}].$$ Notice that  $$\int_{B_1\setminus B_\delta}(h^k)^+dx\leq \int_{B_1\setminus B_\delta}((u^k)^++|v^k|)dx\leq C.$$  Since $\delta<\frac13$, fixing  $\delta+\frac13<r_1<r_2<1-\delta$ we see   that $$\partial B_t(x)\subset B_1\setminus B_\delta\quad \text{for every }x\in\bar B_\delta,\,r_1\leq t\leq r_2.$$Therefore, by mean value theorem, $$2\pi(r_2-r_1)h^k(x)=\int_{r_1}^{r_2}\int_{\partial B_t(x)}h^k(y)d\sigma(y)dt\leq \int_{B_1\setminus B_\delta}(h^k)^+dy\leq C.$$ Thus, $\int_{B_1}(h^k)^+dx\leq C$. If  $\rho^k:=\int_{B_\frac12}|h^k|dx\leq C$ then we have  $$h^k\to h\quad\text{in }C^2_{loc}(B_1),\quad \D h=0\quad\text{in }B_1.$$ In particular, $(h^k)$ is bounded in $C^0_{loc}(B_1)$. If $\rho^k\to\infty$, then $$\frac{h^k}{\rho^k}\to h\quad\text{in }C^2_{loc}(B_1),\quad \D h=0,\quad h<0\quad\text{in }B_1.$$ This shows that $(h^k)$ is locally uniformly bounded from above in $B_1$.  This leads to $$\int_{B_r}e^{2pu^k}dx\leq C_r\int_{B_r}e^{2pv^k}dx\leq C(p,r,\ve,\alpha),\quad 0<r<1,\, p\in [1,\frac{1}{1-\alpha-\ve/2}].$$  Using this uniform bound, and H\"older inequality with $p=\frac{1}{1-\alpha-\ve/2}$, one gets $v^k\leq C$ in $B_r$ for $0<r<1$, and the lemma follows.   \end{proof}

A strong version (precise quantization value of $\sigma_1,\sigma_2$) of the following lemma is proven in \cite{LWYZ,LWZ}. See  \cite{Lucia-Nolasco} for  a Pohozaev type identity for regular $SU(3)$ Toda system.
\begin{lem}[\cite{LWYZ,LWZ}] \label{lem-poho}Let $(u_1^k,u_2^k)$ be a sequence of solutions to  \begin{align}\left\{\begin{array}{ll} -\D u_1^k=2\frac{K_1^k}{|x|^{2\alpha_1}}e^{2u_1^k}-\frac{K_2^k}{|x|^{2\alpha_2}}e^{2u_2^k}&\quad\text{in }B_1\\ \rule{0cm}{.7cm}-\D u_2^k=2\frac{K_2^k}{|x|^{2\alpha_2}}e^{2u_2^k}-\frac{K_1^k}{|x|^{2\alpha_1}}e^{2u_1^k} &\quad\text{in }B_1\\ \rule{0cm}{.6cm}\int_{B_1}\frac{K_i^k}{|x|^{2\alpha_i}}e^{2u_i^k}dx\leq C&\quad  i=1,2 \\ \rule{0cm}{.5cm} |u_i^k(x)-u_i^k(y)|\leq C &\quad \text{for every }x,y\in\partial B_1,\quad i=1,2 \\ \rule{0cm}{.5cm} \|K_i^k\|_{C^3(B_1)}\leq C, \quad 0<\frac1C\leq K_i^k &\quad\text{in }B_1,\quad i=1,2,\end{array}\right.\end{align}  for some $\alpha_1,\alpha_2<1$, and $B_1$ is the unit ball in $\R^2$. Assume that $0$ is  the only blow-up point, that is, $$\sup_{B_1\setminus B_\ve}u_i^k\leq C(\ve)\quad\text{for every }0<\ve<1,\quad i=1,2. $$ Then setting $$\sigma_i:=\lim_{r\to0}\lim_{k\to\infty}\frac{1}{2\pi}\int_{B_r}\frac{K_i^k(x)}{|x|^{2\alpha_i}}e^{2u_i^k(x)}dx,\quad i=1,2,$$ we have $$\sigma_1^2+\sigma_2^2-\sigma_1\sigma_2=\sigma_1(1-\alpha_1)+\sigma_2(1-\alpha_2).$$  In particular,   if $(\sigma_1,\sigma_2)\neq (0,0)$ then $$\sigma_1\geq 1-\alpha_1\quad\text{or }\sigma_2\geq 1-\alpha_2.$$  \end{lem}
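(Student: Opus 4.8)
The plan is to prove the quadratic identity by a \emph{local Pohozaev identity} on $B_r$, passing to the limit first in $k$ and then in $r$, and to deduce the dichotomy from it by an elementary argument on a positive definite quadratic form. Throughout write $V_i^k:=|x|^{-2\alpha_i}K_i^k e^{2u_i^k}$, so the system reads $-\D u_i^k=\sum_j a_{ij}V_j^k$ with $A=(a_{ij})$ the Cartan matrix, and set $\sigma_i=\lim_{r\to0}\lim_{k\to\infty}\frac1{2\pi}\int_{B_r}V_i^k$. Since $K_i^k>0$ we have $V_i^k>0$ and hence $\sigma_i\ge0$. The two computations below both evaluate the same quantity $\sum_i\int_{B_r}V_i^k\,(x\cdot\nabla u_i^k)$.

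First I would compute the ``source side.'' Writing $V_i^k=h_i^k e^{2u_i^k}$ with $h_i^k=|x|^{-2\alpha_i}K_i^k$, one has $x\cdot\nabla V_i^k=V_i^k\bigl(x\cdot\nabla\log h_i^k+2\,x\cdot\nabla u_i^k\bigr)$, and $x\cdot\nabla\log h_i^k=-2\alpha_i+x\cdot\nabla\log K_i^k$ with $|x\cdot\nabla\log K_i^k|=O(|x|)$ by the bounds $\|K_i^k\|_{C^3}\le C$ and $K_i^k\ge 1/C$. Applying the divergence theorem to $\int_{B_r}x\cdot\nabla V_i^k$ gives
\[ \int_{B_r}V_i^k\,(x\cdot\nabla u_i^k)\,dx=\frac r2\int_{\partial B_r}V_i^k\,d\sigma-(1-\alpha_i)\int_{B_r}V_i^k\,dx+O(r)\int_{B_r}V_i^k\,dx. \]
Because $V_i^k$ is bounded in $L^1$ and blows up only at the origin, the boundary term $\tfrac r2\int_{\partial B_r}V_i^k$ is negligible in the iterated limit (along suitable radii), so summing over $i$ and letting $k\to\infty$ and then $r\to0$ yields $\sum_i\int_{B_r}V_i^k(x\cdot\nabla u_i^k)\to-2\pi\sum_i(1-\alpha_i)\sigma_i$.

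Second, I would rewrite the same quantity as a boundary integral using the inverse Cartan matrix $B=(b_{ij})=A^{-1}$, so that $V_i^k=\sum_j b_{ij}(-\D u_j^k)$ and $\sum_i\int_{B_r}V_i^k(x\cdot\nabla u_i^k)=\sum_{i,j}b_{ij}\int_{B_r}(-\D u_j^k)(x\cdot\nabla u_i^k)$. Integrating by parts and symmetrizing in $i\leftrightarrow j$ (legitimate since $b_{ij}$ is symmetric) makes the interior Dirichlet integrals cancel, leaving, after using $x\cdot\nabla u_i^k=r\,\partial_\nu u_i^k$ on $\partial B_r$,
\[ \sum_i\int_{B_r}V_i^k(x\cdot\nabla u_i^k)=\frac r2\sum_{i,j}b_{ij}\int_{\partial B_r}\bigl(\partial_\tau u_i^k\,\partial_\tau u_j^k-\partial_\nu u_i^k\,\partial_\nu u_j^k\bigr)\,d\sigma. \]
The decisive input is the asymptotics of $\nabla u_i^k$ on $\partial B_r$: decomposing $u_i^k$ into the Newtonian potential of $\sum_j a_{ij}V_j^k$ plus a harmonic remainder, the single blow-up hypothesis forces the potential part to behave like $-\Lambda_i\log|x|$ with $\Lambda_i:=\sum_j a_{ij}\sigma_j$, while the boundary oscillation bound $|u_i^k(x)-u_i^k(y)|\le C$ controls the harmonic remainder in $C^1$ on $\partial B_r$; together these give $\partial_\nu u_i^k=-\Lambda_i/r+O(1)$ and $\partial_\tau u_i^k=o(1/r)$ uniformly in $k$. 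Substituting, the tangential terms are negligible, and $-\frac r2\sum_{i,j}b_{ij}\int_{\partial B_r}\partial_\nu u_i^k\partial_\nu u_j^k\to-\pi\sum_{i,j}b_{ij}\Lambda_i\Lambda_j=-\pi\,\langle A\sigma,\sigma\rangle=-2\pi(\sigma_1^2+\sigma_2^2-\sigma_1\sigma_2)$, where I used $\Lambda=A\sigma$ and the symmetry of $A$ to write $\sum_{i,j}b_{ij}\Lambda_i\Lambda_j=\sigma^\top A\sigma=2(\sigma_1^2+\sigma_2^2-\sigma_1\sigma_2)$. Equating the two limits of $\sum_i\int_{B_r}V_i^k(x\cdot\nabla u_i^k)$ gives $\sigma_1^2+\sigma_2^2-\sigma_1\sigma_2=(1-\alpha_1)\sigma_1+(1-\alpha_2)\sigma_2$.

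Finally, the dichotomy is elementary: if $\sigma_1<1-\alpha_1$ and $\sigma_2<1-\alpha_2$ with $(\sigma_1,\sigma_2)\neq(0,0)$, then, since $\sigma_i\ge0$,
\[ (1-\alpha_1)\sigma_1+(1-\alpha_2)\sigma_2-(\sigma_1^2+\sigma_2^2-\sigma_1\sigma_2)=\sigma_1(1-\alpha_1-\sigma_1)+\sigma_2(1-\alpha_2-\sigma_2)+\sigma_1\sigma_2>0, \]
where strict positivity follows because each bracketed factor is positive, each $\sigma_i\ge0$, and at least one $\sigma_i>0$; this contradicts the identity, so $\sigma_1\ge1-\alpha_1$ or $\sigma_2\ge1-\alpha_2$. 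I expect the main obstacle to be the boundary gradient asymptotics invoked in the third step --- precisely the statement that $\nabla u_i^k$ is asymptotically radial on $\partial B_r$ with logarithmic coefficient $\Lambda_i$, so that $\partial_\tau u_i^k=o(1/r)$. This is where the hypotheses $\|K_i^k\|_{C^3}\le C$, $K_i^k\ge1/C$, the boundary oscillation control, and the single blow-up assumption all enter, and it requires a Harnack-type estimate together with a careful potential-theoretic decomposition (as carried out in \cite{LWYZ,LWZ}) rather than the formal manipulations above.
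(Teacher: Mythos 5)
The paper does not actually prove this lemma: it is imported wholesale from \cite{LWYZ,LWZ} (with only the parenthetical remark in the proof of Lemma 3.1 that the single-equation case follows from ``a local Pohozaev type identity''). Your sketch reconstructs precisely the Pohozaev argument that underlies those references, and the algebra is correct: the source-side computation $\int_{B_r}V_i^k(x\cdot\nabla u_i^k)=\frac r2\int_{\partial B_r}V_i^k-(1-\alpha_i)\int_{B_r}V_i^k+O(r)\int_{B_r}V_i^k$ is right; the symmetrization via $B=A^{-1}$ and the identity $\Lambda^\top B\Lambda=\sigma^\top A\sigma=2(\sigma_1^2+\sigma_2^2-\sigma_1\sigma_2)$ with $\Lambda=A\sigma$ are right; and the elementary deduction of the dichotomy from $\sigma_i\geq 0$, $(\sigma_1,\sigma_2)\neq(0,0)$ and the quadratic identity is complete and correct. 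Your handling of the boundary term $\frac r2\int_{\partial B_r}V_i^k$ ``along suitable radii'' also works: since $\lim_{r\to0}\lim_k\int_{B_r\setminus B_{r/2}}V_i^k=0$, a mean-value choice of $t\in(r/2,r)$ makes $t\int_{\partial B_t}V_i^k$ vanish in the iterated limit.

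The one place where your argument is genuinely incomplete --- and you say so yourself --- is the uniform boundary gradient asymptotics $\partial_\nu u_i^k=-\Lambda_i/r+O(1)$, $\partial_\tau u_i^k=o(1/r)$, which must hold uniformly in $k$ at the good radii and requires the Green representation, the boundary oscillation hypothesis, and scaled elliptic estimates on annuli $B_{2r}\setminus B_{r/2}$; this is exactly the content delegated to \cite{LWYZ,LWZ}, so your sketch is no less complete than the paper on this point. Two minor technical items you gloss over, both standard: (i) for $\alpha_i\in[\tfrac12,1)$ the solutions need not be $C^1$ at the origin, so the Pohozaev identity should be applied on $B_r\setminus B_\ve$ with the inner boundary terms checked to vanish as $\ve\to0$ (they do, since $|\nabla u_i^k|=O(|x|^{1-2\alpha_i})+O(1)$); (ii) the hypothesis $\|K_i^k\|_{C^3}\leq C$ is stronger than the $C^1$ control you actually use --- the extra regularity is needed in \cite{LWYZ,LWZ} for the sharper quantization of $(\sigma_1,\sigma_2)$, not for the identity stated here.
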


\begin{rem}  If $\alpha_1=\alpha_2=\alpha$, $K_1^k=K_2^k$ and  $u_1^k=u_2^k$ in the above lemma,  then  $\sigma_1=\sigma_2=2(1-\alpha)$. \end{rem}
\begin{thm}[\cite{HMM,PT}] \label{thmA}Let $u$ be a normal solution to \begin{align}\label{eq-classi}(-\D)^\frac n2u=|x|^{n\alpha}e^{nu}\quad\text{in }\R^n,\quad \Lambda:=\int_{\R^n}|x|^{n\alpha}e^{nu}dx<\infty,\end{align} for some $\alpha>-1$ and $n\geq 2$, that is, $u$ satisfies the integral equation  $$u(x)=\frac{1}{\gamma_n}\int_{\R^n}\log\left(\frac{1+|y|}{|x-y|}\right)|y|^{n\alpha}e^{nu(y)}dy+C,$$ for some $C\in\R.$   Then $\Lambda=\Lambda_1(1+\alpha)$, $\Lambda_1:=2\gamma_n$.  \end{thm}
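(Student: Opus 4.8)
The plan is to sidestep the (possibly nonlocal) operator $(-\D)^{n/2}$ entirely and instead read off the value of $\Lambda$ from a Pohozaev-type identity built directly from the representation formula. Concretely, I would compute the single quantity
$$I:=\int_{\R^n}(x\cdot\nabla u(x))\,|x|^{n\alpha}e^{nu(x)}\,dx$$
in two independent ways and equate the results. Throughout write $d\mu(x):=|x|^{n\alpha}e^{nu(x)}\,dx$, so that $\mu(\R^n)=\Lambda$.

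First I would obtain $I$ by a bulk integration by parts. Using $(x\cdot\nabla u)e^{nu}=\frac1n\,x\cdot\nabla(e^{nu})$ together with $\nabla\cdot(|x|^{n\alpha}x)=n(1+\alpha)|x|^{n\alpha}$, integration by parts over an annulus $B_R\setminus B_\ve$ gives
$$\int_{B_R\setminus B_\ve}(x\cdot\nabla u)\,d\mu=\frac1n\int_{\partial(B_R\setminus B_\ve)}e^{nu}|x|^{n\alpha}(x\cdot\nu)\,d\sigma-(1+\alpha)\int_{B_R\setminus B_\ve}d\mu.$$
The inner boundary term is $O(\ve^{n(1+\alpha)})$ and vanishes as $\ve\to0$ because $\alpha>-1$. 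Writing $g(R):=\int_{\partial B_R}|x|^{n\alpha}e^{nu}\,d\sigma$, the outer boundary term equals $\frac1n R\,g(R)$; since $\int_0^\infty g(R)\,dR=\Lambda<\infty$ we have $\liminf_{R\to\infty}R\,g(R)=0$, so it vanishes along a suitable sequence $R_k\to\infty$. Hence $I=-(1+\alpha)\Lambda$.

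Second I would obtain $I$ from the representation formula. Differentiating the integral equation under the integral sign yields $x\cdot\nabla u(x)=-\frac{1}{\gamma_n}\int_{\R^n}\frac{x\cdot(x-y)}{|x-y|^2}\,d\mu(y)$, so that
$$I=-\frac{1}{\gamma_n}\int_{\R^n}\int_{\R^n}\frac{x\cdot(x-y)}{|x-y|^2}\,d\mu(x)\,d\mu(y).$$
Since the product measure is symmetric, I would symmetrize the integrand in $x$ and $y$; the kernel collapses, $\frac12\big(\frac{x\cdot(x-y)}{|x-y|^2}+\frac{y\cdot(y-x)}{|x-y|^2}\big)=\frac12$, giving $I=-\frac{\Lambda^2}{2\gamma_n}$. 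Equating the two values of $I$ yields $1+\alpha=\frac{\Lambda}{2\gamma_n}$, i.e. $\Lambda=2\gamma_n(1+\alpha)=\Lambda_1(1+\alpha)$, which is the claim.

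The main obstacle is the justification of the analytic manipulations rather than the algebra. I must verify that differentiation under the integral sign is legitimate and that the double integral above is absolutely convergent, so that Fubini and the symmetrization step are valid. Near the diagonal the kernel is controlled by $|x|/|x-y|$, which is locally integrable precisely because $n\geq2$; the weight $|x|^{n\alpha}$ is integrable at the origin precisely because $\alpha>-1$; and integrability at infinity, together with the vanishing of the outer boundary term, is guaranteed by $\Lambda<\infty$. Once these are in place the two computations of $I$ are valid for every $n\geq2$ simultaneously, with no appeal to the structure (local or nonlocal) of the operator $(-\D)^{n/2}$.
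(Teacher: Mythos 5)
The paper does not prove Theorem \ref{thmA} itself---it imports it from \cite{HMM,PT}---and your Pohozaev argument (computing $\int_{\R^n}(x\cdot\nabla u)\,|x|^{n\alpha}e^{nu}\,dx$ once by integration by parts against $\nabla\cdot(|x|^{n\alpha}x)=n(1+\alpha)|x|^{n\alpha}$ and once from the logarithmic representation formula, then symmetrizing the kernel to $\tfrac12$) is essentially the proof given in those references, with the diagonal-integrability and differentiation-under-the-integral issues you flag being exactly the technical points handled there. One small correction: the inner boundary term is not $O(\ve^{n(1+\alpha)})$ unless you already know $e^{nu}$ is bounded near the origin (which a normal solution need not satisfy a priori), but since $\int_0^1\bigl(\int_{\partial B_\ve}|x|^{n\alpha}e^{nu}\,d\sigma\bigr)\,d\ve\le\Lambda<\infty$ it still vanishes along a suitable sequence $\ve_k\to0$ by the same $\liminf$ device you use at the outer boundary.
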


% It is worth pointing out that every solution  of \eqref{eq-classi} in dimension two is a normal solution. In higher dimension $n\geq 3$, a solution $u$ to \eqref{eq-classi} is normal if and only if $u$ satisfies  $$u(x)=o(|x|^2)\quad\text{as }|x|\to\infty\quad\text{or }\int_{B_R}|\D u|dx\leq CR^{n-2}\quad\text{for every }R>0.$$  This characterization of normal solution follows from the classification results obtained in \cite{HMM, PT,...}.

 %$$\sigma_1[\sigma_1-(1-\alpha_1)]+\sigma_2[\sigma_2-(1-\alpha_2)]=\sigma_1\sigma_2\geq0.$$


\begin{thebibliography}{10}
\small

\bibitem{Adi}\textsc{Adimurthi, K. Sandeep:}  \emph{A singular Moser-Trudinger embedding and its applications}, NoDEA Nonlinear Differential Equations Appl. \textbf{13} (2007), no. 5-6, 585-603.

\bibitem{AW} \textsc{W. Ao, L. Wang}, \emph{ New concentration phenomena for SU(3) Toda system,} J. Differential Equations \textbf{256} (2014) 1548-1580.


\bibitem{Aviles} \textsc{P. Aviles:} \emph{Conformal complete metrics with prescribed nonnegative Gaussian curvature in $\mathbb{R}^2$}, Invent. Math. \textbf{83} (1986), no. 3, 519-544.

\bibitem{Bar-Lin}  \textsc{D. Bartolucci, A.  Jevnikar, C. S.  Lin:} \emph{Non-degeneracy and uniqueness of solutions to singular mean field equations on bounded domains}, J. Differential Equations \textbf{266} (2019), no. 1, 716-741.

\bibitem{BJMR} \textsc{L. Battaglia, A. Jevnikar, A. Malchiodi, D. Ruiz,} \emph{ A general existence result for the Toda system on compact surfaces,} Adv. Math. \textbf{285 } (2015) 937-979.

\bibitem{BMM} \textsc{D. Bartolucci, F. De Marchis, A.  Malchiodi:} \emph{Supercritical conformal metrics on surfaces with conical singularities},  Int. Math. Res. Not. IMRN \textbf{2011}, (2011) no. 24, 5625-5643.

\bibitem{Bar-Tar} \textsc{D. Bartolucci, G. Tarantello:} \emph{The Liouville equation with singular data: a concentration-compactness principle via a local representation formula}, J. Differential Equations \textbf{185} (1) (2002) 161-180.

\bibitem{Bar-Tar2} \textsc{D. Bartolucci, G. Tarantello:}  \emph{Liouville type equations with singular data and their applications to periodic multivortices for the electroweak theory}, Comm. Math. Phys. \textbf{229} (2002), no. 1, 3-47.

\bibitem{BM}\textsc{L. Battaglia, A. Malchiodi:} \emph{Existence and non-existence results for the $SU(3)$ singular Toda system on compact surfaces}, J. Functional Analysis \textbf{270} (2016), 3750-3807.

\bibitem{Bolton} \textsc{J. Bolton, L.M. Woodward:} \emph{Some geometrical aspects of the $2$-dimensional Toda equations}, in: Geometry, Topology and Physics, Campinas, (1996), de Gruyter, Berlin, 1997, pp.69-81

\bibitem{Brezis-Merle}\textsc{H. Brezis, F. Merle:} \emph{Uniform estimates and blow-up behaviour for solutions of $-\Delta u=V(x)e^u$ in two dimensions}, Comm. Partial Differential Equations \textbf{16} (1991), 1223-1253.


\bibitem{Calabi}\textsc{E. Calabi:} \emph{Isometric imbedding of complex manifolds}, Ann. of Math.  \textbf{58} no. 2, (1953), 1-23.




\bibitem{Carlotto}\textsc{A. Carlotto, A. Malchiodi:} \emph{Andrea A class of existence results for the singular Liouville equation} C. R. Math. Acad. Sci. Paris \textbf{349} (2011), no. 3-4, 161-166.


\bibitem{CL} \textsc{W. Chen, C. Li:} \emph{Classification of solutions of some nonlinear elliptic equations}, Duke Math. J. \textbf{63} (3) (1991), 615-622.

\bibitem{WXChen}\textsc{W. X. Chen:}  \emph{A Tr\"udinger inequality on surfaces with conical singularities}, Proc. Amer. Math. Soc. \textbf{108} (3) (1990) 821-832.

\bibitem{CW} \textsc{S. S. Chern, J.G. Wolfson:}\emph{Harmonic maps of the two-sphere into a complex Grassmann manifold. II}, Ann. of Math.  \textbf{125} no. 2,  (1987) 301-335.

\bibitem{DPR} \textsc{D'Aprile, Teresa; Pistoia, Angela; Ruiz, David} \emph{ Asymmetric blow-up for the SU(3) Toda system.} J. Funct. Anal. \textbf{271} (2016), no. 3, 495-531.

\bibitem{Doliwa}\textsc{A. Doliwa:}  \emph{Holomorphic curves and Toda systems} Lett. Math. Phys. 39(1), 21-32 (1997).

\bibitem{Dunne} \textsc{G. Dunne:}  \emph{Self-dual Chern-Simons Theories},  Lecture Notes in Physics. Springer, Berlin (1995).

\bibitem{Eremenko} \textsc{A. Eremenko:} \emph{Metrics of positive curvature with conic singularities on the sphere}, Proc. Amer. Math. Soc. \textbf{132} (2004), no. 11, 3349-3355.

\bibitem{H-class} \textsc{A. Hyder:} \emph{Structure of conformal metrics on $\R^n$ with constant $Q$-curvature}, to appear in Differential and Integral Equations (2019), arXiv: 1504.07095 (2015).

\bibitem{HMM} \textsc{A. Hyder, G. Mancini, L. Martinazzi:} \emph{Local and nonlocal singular Liouville equations in Euclidean spaces}, arXiv: 1808.03624 (2018).

\bibitem{JLW}\textsc{J. Jost, C.-S. Lin, G. Wang:} \emph{Analytic aspects of the Toda System II: Bubbling behavior and existence of solutions}, Comm. Pure Appl. Math. \textbf{59} (2006), no. 4, 526-558.

\bibitem{JW} \textsc{J. Jost,  G.  Wang:} \emph{Classification of solutions of a Toda system in $\R^2$}, Int. Math. Res. Not. \textbf{6} (2002), 277-290.

\bibitem{Lin} \textsc{C. -S. Lin:} \emph{A classification of solutions of conformally invariant fourth order equations in $\R^{n}$}, Comm. Math. Helv \textbf{73} (1998), 206-231.

\bibitem{LNW} \textsc{C. -S. Lin, Z. Nie,  J. Wei:} \emph{Toda system and hypergeometric equations}, Transcations of American Math Society \textbf{370} (2018), no. 11, 7605-7626.


\bibitem{LWYZ} \textsc{C. -S. Lin, J.  Wei, W.  Yang, L.   Zhang:} \emph{On rank-$2$ Toda systems with arbitrary singularities: local mass and new estimates} Anal. PDE \textbf{11} (2018), no. 4, 873-898.

\bibitem{LWY} \textsc{C. -S. Lin, J. Wei, D. Ye:} \emph{Classification and nondegeneracy of $SU(n+1)$ Toda system with singular sources}, Invent. Math. \textbf{190} (2012), no. 1, 169-207.

\bibitem{LWZ} \textsc{C. -S. Lin,  J.  Wei,  L.  Zhang:}  \emph{Classification of blowup limits for $SU(3)$ singular Toda systems},  Anal. PDE \textbf{8} (2015), no. 4, 807-837.

\bibitem{Lucia-Nolasco} \textsc{M. Lucia, M. Nolasco:} \emph{$SU(3)$ Chern-Simons vortex theory and Toda systems}, J. Diff. Equations \textbf{184} (2002),  443-474.

\bibitem{Luo-Tian} \textsc{F. Luo, G. Tian:} \emph{Liouville equation and spherical convex polytopes}, Proc. Amer. Math. Soc. \textbf{116} (1992) no. 4, 1119-1129.

\bibitem{Malchiodi-Ruiz}\textsc{A. Malchiodi, D. Ruiz:} \emph{New improved Moser-Trudinger inequalities and singular Liouville equations on compact surfaces}, Geom. Funct. Anal. \textbf{21} (2011), no. 5, 1196-1217.

\bibitem{MarClass} \textsc{L. Martinazzi,}  \emph{Classification of solutions to the higher order Liouville's equation on $\mathbb{R}^{2m}$}, Math. Z. \textbf{263} (2009), 307-329.%

\bibitem{MPW}\textsc{M. Musso, A.  Pistoia, J. Wei:} \emph{New blow-up phenomena for $SU(n+1)$ Toda system}, J. Differential Equations \textbf{260} (2016), no. 7, 6232-6266.

%\bibitem{McOwen}\textsc{R. C. McOwen:} \emph{Conformal metrics in $\R^2$ with prescribed Gaussian curvature and positive total curvature},  Indiana Univ. Math. J. \textbf{34} no. 1,  (1985),  97-104.

\bibitem{PT} \textsc{J. Prajapat, G. Tarantello:} \emph{ On a class of elliptic problems in $\R^2$: symmetry and uniqueness results}, Proc. Royal Soc. Edinburgh \textbf{131 A} (2001), 967-985.



\bibitem{NT}\textsc{M. Nolasco, G. Tarantello:}\emph{Vortex condensates for the $SU(3)$ Chern-Simons theory} Commun. Math. Phys. 213(3), 599-639 (2000).

\bibitem{Ta} \textsc{G. Tarantello:} \emph{Multiple condensate solutions for the Chern-Simons-Higgs theory}, J. Math. Phys. \textbf{37} (1996) 3769-3796.

\bibitem{Tr} \textsc{M. Troyanov}, \emph{Prescribing curvature on compact surfaces with conical singularities}, Trans.  Am. Math. Soc. \textbf{324}  (1991) 793-821.

% \emph{Prescribing curvature on compact surfaces with conical singularities}, Trans. Amer. Math. Soc. \textbf{324} (2) (1991) 793-821.

\bibitem{Tr2}\textsc{M. Troyanov:} \emph{Metric of constant curvature on a sphere with two conical singularities}, in Differential geometry,  Lect. Notes in Math., vol. 1410, Springer-Verlag, 1989, pp. 296-306.

\bibitem{Umehara} \textsc{M. Umehara, K. Yamada:} \emph{Metrics of constant curvature $1$ with three conical singularities on the $2$-sphere}, Illinois J. Math. \textbf{44} (2000), no. 1, 72-94.

\bibitem{WY} \textsc{J. Wei,  D. Ye}, \emph{Nonradial solutions for a conformally invariant fourth order equation in $\mathbb{R}^3$,} Calc. Var. Partial Differential Equations \textbf{32} (2008), no. 3, 373-386.


\bibitem{Yang}\textsc{Y. Yang:}  \emph{The relativistic non-abelian Chern-Simons equation}, Commun. Phys. 186(1),
199-218 (1999).

\end{thebibliography}
\end{document}